\newtheorem{thm}{Theorem}[section]
\newtheorem{prop}[thm]{Proposition}
\newtheorem{lem}[thm]{Lemma}
\newtheorem{cor}[thm]{Corollary}
\newtheorem{conj}[thm]{Conjecture}
\numberwithin{equation}{section}
\theoremstyle{definition}
\newtheorem{remark}[thm]{Remark}
\DeclareFontFamily{U}{mathc}{}
\DeclareFontShape{U}{mathc}{m}{it}%
{<->s*[1.03] mathc10}{}
\DeclareMathAlphabet{\mathcal}{U}{mathc}{m}{it}
\newcommand{\Db}{{\rm D}^{\rm b}}
\newcommand{\Br}{{\rm Br}}
\newcommand{\Pic}{{\rm Pic}}
\newcommand{\sign}{\sigma}
\newcommand{\coker}{{\rm coker}}
\newcommand{\cal}{\mathcal}
\newcommand{\ka}{{\cal A}}
\newcommand{\kc}{{\cal C}}
\newcommand{\kf}{{\cal F}}
\newcommand{\ko}{{\cal O}}
\newcommand{\kp}{{\cal P}}
\newcommand{\kq}{{\cal Q}}
\newcommand{\kz}{{\cal Z}}
\newcommand{\GG}{\mathbb{G}}
\newcommand{\ZZ}{\mathbb{Z}}
\newcommand{\QQ}{\mathbb{Q}}
\newcommand{\PP}{\mathbb{P}}
\newcommand{\Gm}{\mathbb{G}_m}
\DeclareSymbolFont{cyrletters}{OT2}{wncyr}{m}{n}
\DeclareMathSymbol{\Sha}{\mathalpha}{cyrletters}{"58}
\renewcommand{\to}{\xymatrix@1@=15pt{\ar[r]&}}
\newcommand{\lto}{\xymatrix@1@=15pt{&\ar[l]}}
\renewcommand{\rightarrow}{\xymatrix@1@=15pt{\ar[r]&}}
\renewcommand{\mapsto}{\xymatrix@1@=15pt{\ar@{|->}[r]&}}
\newcommand{\mapslto}{\xymatrix@1@=15pt{&\ar@{|->}[l]&}}
\renewcommand{\twoheadrightarrow}{\xymatrix@1@=18pt{\ar@{->>}[r]&}}
\renewcommand{\hookrightarrow}{\xymatrix@1@=15pt{\ar@{^(->}[r]&}}
\newcommand{\hook}{\xymatrix@1@=15pt{\ar@{^(->}[r]&}}
\newcommand{\congpf}{\xymatrix@1@=15pt{\ar[r]^-\sim&}}
\renewcommand{\cong}{\simeq}
\newcommand{\TBC}[1]{}
\newcommand{\EV}[1]{}
\def\blfootnote{\xdef\@thefnmark{}\@footnotetext}
\begin{document}

\title[]{Derived categories of Fano varieties of lines}

\author[A.\ Bottini \& D.\ Huybrechts]{Alessio Bottini \& Daniel Huybrechts}

\address{Max-Planck Institute for Mathematics, Vivatsgasse 7, 53111 Bonn, Germany \& Mathematical Institute and Hausdorff Center for Mathematics,
University of Bonn, Endenicher Allee 60, 53115 Bonn, Germany}
\email{bottini@mpim-bonn.mpg.de \& huybrech@math.uni-bonn.de}

\begin{abstract}  \vspace{-2mm} We gather evidence for a conjecture of Galkin predicting the derived category of the Fano variety $F_X$ of lines contained in a smooth cubic fourfold $X\subset\PP^5$ to be equivalent to the Hilbert square $\ka_X^{[2]}$ of the Kuznetsov component $\ka_X\subset\Db(X)$. We prove the conjecture for generic Fano varieties admitting a rational Lagrangian fibration and show that the natural Hodge structures of weight two associated with $F_X$ and $\ka_X^{[2]}$ are isometric.\end{abstract}

\maketitle
\blfootnote{Both authors have been supported by the ERC Synergy Grant HyperK (ID 854361).}

With any smooth cubic hypersurface $X\subset\PP^5$ of dimension four there is associated
the Fano variety of lines $F_X$ which is a projective hyperk\"ahler manifold  of dimension four deformation equivalent
to the Hilbert square $S^{[2]}$ of a K3 surface $S$. While derived categories of K3 surfaces
have been studied intensively, notably by Mukai \cite{Mukai}, Orlov \cite{Orlov}, and Bridgeland \cite{BK3},  not much is known about  derived categories of higher-dimensional hyperk\"ahler manifolds. The derived category $\Db(S^{[2]})$ of the Hilbert square of a K3 surface is a good place to start and another interesting and accessible case is the derived category $\Db(F_X)$ of the Fano variety of lines.\smallskip

Conjecturally, these two kinds of derived categories are related by means of the Hilbert square of the Kuznetsov component $\ka_X$. By definition, the K3 category $\ka_X\subset\Db(X)$ is the admissible full triangulated sub-category of all complexes right orthogonal to the three line bundles $\ko_X,\ko_X(1),\ko_X(2)\in\Db(X)$, see \cite{Kuz} or \cite[Ch.\ 7.1]{HuyComp}. By work of Addington and Thomas \cite{AddTh}, we know that for cubics in a countable union of hypersurfaces $\kc_d\subset\kc$ in the moduli space $\kc$ of all smooth cubic fourfolds the Kuznetsov component $\ka_X$ is in fact
equivalent to the derived category $\Db(S)$ of a K3 surface $S$ or, more generally, to the
derived category $\Db(S,\alpha)$ of a K3 surface $S$ twisted by a Brauer class $\alpha\in \Br(S)$, cf.\ \cite{HuyComp}. Imitating the passage from a K3 surface $S$ to its Hilbert square
$S^{[2]}$, one associates with 
$\ka_X\subset \Db(X)$
 its Hilbert square $\ka_X^{[2]}\subset\Db(X^{[2]})$, cf.\  \S\! \ref{sec:Hilbsquare}. Note that
 by \cite{BKR}, 
$\ka_X^{[2]}\cong\Db(S^{[2]})$ if $\ka_X\cong \Db(S)$.\smallskip

The following  conjecture has been formulated independently by various people but probably first in talks by Galkin almost ten years ago. 

\begin{conj}\label{conj}
The Fano variety $F_X$  of lines contained in a smooth cubic fourfold $X\subset\PP^5$ is derived equivalent
to the Hilbert square of the Kuznetsov component, i.e.\ there exists an exact linear equivalence
\begin{equation}\label{eqn:conj}
\Db(F_X)\cong\ka_X^{[2]}.
\end{equation}
\end{conj}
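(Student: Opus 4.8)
\emph{Overall strategy.} The conjecture is open in general, so the plan is to secure the two pieces of evidence within reach: the weight-two Hodge-theoretic shadow of \eqref{eqn:conj} in complete generality, and the equivalence itself for cubics whose Fano variety admits a rational Lagrangian fibration. The point of departure is Addington--Thomas \cite{AddTh}: $\ka_X$ is geometric, i.e.\ equivalent to $\Db(S)$ or to a twisted derived category $\Db(S,\alpha)$ of a $\K3$ surface, precisely for $X$ in the dense countable union $\bigcup_d\kc_d\subset\kc$, and for such $X$ one has $\ka_X^{[2]}\cong\Db(S^{[2]})$, respectively its twisted analogue, by \cite{BKR}; so over $\bigcup_d\kc_d$ the conjecture is a comparison of two projective hyperk\"ahler fourfolds of $\K3^{[2]}$-type, $F_X$ and $S^{[2]}$. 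The bridge is that both are moduli spaces of Bridgeland-stable objects in the \emph{same} $\K3$ category: by the work of Lahoz--Lehn--Macr\`{\i}--Stellari, Bayer--Lahoz--Macr\`{\i}--Stellari and Li--Pertusi--Zhao, $F_X\cong M_\sigma(\ka_X,v)$ for a distinguished Mukai vector $v$ of square $2$ and a suitable stability condition $\sigma$, while $S^{[2]}=M(S,v_0)$ with $v_0=(1,0,-1)$, again of square $2$; the guiding principle is a derived global Torelli statement, namely that two such moduli spaces attached to primitive Mukai vectors of equal square in a fixed $\K3$ category are derived equivalent via a Fourier--Mukai transform built from the (twisted, shifted) universal object.

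\emph{Step 1: the weight-two Hodge structures, unconditionally.} First I would compare the two weight-two lattices. On the $F_X$ side, Beauville--Donagi identify $H^2(F_X,\ZZ)$ with $H^4(X,\ZZ)$ up to a Tate twist, matching the Beauville--Bogomolov--Fujiki form with the intersection form (up to sign) and the Pl\"ucker class with $h^2$. On the $\ka_X^{[2]}$ side one defines the weight-two lattice to be $v^\perp\subset\widetilde H(\ka_X,\ZZ)$ for the distinguished square-$2$ class $v$ in the Mukai--Hodge structure of \cite{AddTh}; in the geometric case this is Hodge-isometric to $v_0^\perp=H^2(S^{[2]},\ZZ)$. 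Since $v$ lies in the canonical $A_2$-sublattice of \cite{AddTh} whose orthogonal complement is Hodge-isometric to the primitive cohomology $H^4_{\mathrm{prim}}(X,\ZZ)$ (up to the sign fixed by the Tate twist), and since $v^\perp\cap A_2$ is spanned by a class of square $6$ matching the Pl\"ucker polarization, the identification $H^2(F_X,\ZZ)\cong v^\perp$ follows from Nikulin's uniqueness of the even lattice with the relevant indefinite signature and discriminant form, together with the coincidence of periods (both sides carry the Hodge structure of $H^4(X)$ through $H^{3,1}(X)$). This uses no genericity and no derived input beyond \cite{AddTh}, and it yields the claimed isometry of weight-two Hodge structures, including the $(1,1)$-parts.

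\emph{Step 2: generic $F_X$ with a rational Lagrangian fibration.} Suppose $F_X$ carries a rational Lagrangian fibration $F_X\dashrightarrow\PP^2$. The nonzero isotropic class in $\NS(F_X)$ defining it forces $X\in\kc_d$ for $d$ in an explicit list, with $\ka_X\cong\Db(S,\alpha)$ for a $\K3$ surface $S$ of degree two (a double plane); the Beauville--Mukai linear system then equips $S^{[2]}$ with its own rational Lagrangian fibration over the same $\PP^2$, and $F_X$ and $S^{[2]}$ are birational to relative compactified Jacobians over $\PP^2$ with fibrewise-dual generic abelian surfaces. I would realize the conjectural equivalence as a relative Fourier--Mukai transform along these Lagrangian fibrations, built from a relative Poincar\'e sheaf and Arinkin-type autoduality of compactified Jacobians, and then pass from the compactified Jacobians to $F_X$ and to $S^{[2]}$ using that birational hyperk\"ahler manifolds of $\K3^{[2]}$-type are derived equivalent (via Mukai flops); composing with $\ka_X^{[2]}\cong\Db(S^{[2]})$ gives \eqref{eqn:conj}. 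The genericity of $X$, which keeps $\NS(F_X)$ of rank $2$, is used to control the singular fibres of the Lagrangian fibrations and the chamber structure so that these birational identifications are available.

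\emph{The main obstacle.} The essential difficulty is the non-geometric case: when $\ka_X\not\cong\Db(S,\alpha)$ there is at present no geometric incarnation of $\ka_X^{[2]}$, so one cannot construct the equivalence by hand, and one would need either a usable theory of Hilbert squares of abstract $\K3$ categories as moduli-type spaces or a mechanism to propagate \eqref{eqn:conj} off the dense union $\bigcup_d\kc_d$---but derived equivalences do not vary nicely in families, so even a formal-neighbourhood argument is delicate. A second, more technical, obstacle is already present over $\kc_d$ and is the reason Step 2 is restricted to generic $X$: upgrading ``$F_X$ is birational to a moduli space'' to an actual chain of derived equivalences reaching $S^{[2]}$ requires tight control of the birational geometry of $F_X$ and of the singular fibres of the Lagrangian fibration, which for special $X$ becomes unmanageable.
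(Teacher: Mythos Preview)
Your overall plan matches the paper's, but Step~2 has a genuine gap: you lose track of the Brauer class. The condition that $F_X$ admits a rational Lagrangian fibration is ${\rm(\ast\ast'')}$, which forces $\ka_X\cong\Db(S,\alpha)$ for a degree-two K3 but in general with $\alpha\ne1$, so your final line ``$\ka_X^{[2]}\cong\Db(S^{[2]})$'' should read $\Db(S^{[2]},\alpha^{[2]})$ (twisted BKR, Proposition~\ref{prop:twBKR}). The Arinkin-type autoduality you invoke is then not $\Db(M)\cong\Db(M_\alpha)$ but $\Db(M,\theta)\cong\Db(M_\alpha)$ for an obstruction class $\theta\in\Br(M)$, and the crux of the paper's argument (Proposition~\ref{prop:IDBr}, via a result of Bhatt) is to identify $\theta$ with $\alpha^{[2]}$ under the flop-induced isomorphism $\Br(M)\cong\Br(S^{[2]})$; one also needs a \emph{twisted} Kawamata--Namikawa equivalence for the flop $M\sim S^{[2]}$ (Proposition~\ref{prop:KNADM}). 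Without tracking $\alpha$ through the chain $\Db(F_X)\cong\Db(M_\alpha)\cong\Db(M,\theta)\cong\Db(S^{[2]},\alpha^{[2]})\cong\ka_X^{[2]}$, your sketch collapses to the case $\alpha=1$, which is the easier Theorem~\ref{thm1}. (Relatedly, your opening claim that $\ka_X$ is geometric for every $X\in\bigcup_d\kc_d$ is false: this requires $d$ to satisfy ${\rm(\ast\ast')}$, failing e.g.\ for $d=12$.)

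A smaller point on Step~1: what you actually compare is $H^2(F_X,\ZZ)$ with $v^\perp\subset\widetilde H(\ka_X,\ZZ)$, both of rank $23$; this is essentially Addington's known identification $H^2(F_X,\ZZ)\cong\lambda_1^\perp$. The paper's Theorem~\ref{thm3} is about the rank-$25$ \emph{extended} lattices $\widetilde H(F_X,\ZZ)=H^2(F_X,\ZZ)\oplus U$ and $\widetilde H(\ka_X^{[2]},\ZZ)=\widetilde H(\ka_X,\ZZ)\oplus\ZZ\cdot\delta$, and its proof realises both inside the common ambient lattice $\widetilde H(\ka_X,\ZZ)\oplus U$ as orthogonal complements of the square-$2$ classes $\lambda_1$ and $e+f$, then uses Nikulin to build a Hodge isometry of the ambient lattice carrying one to the other.
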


It is not difficult to see that $\Db(F_X)$ and $\ka_X^{[2]}$ define the same class in the Grothendieck ring of dg-enhanced triangulated categories, i.e.
$$[\Db(F_X)]=[\ka_X^{[2]}] \in K_0(\text{dg-cat}).$$
This observation goes back to Galkin and Shinder \cite{GS} and was upgraded by Belmans, Fu, and Raedschelders \cite{BFR} to the statement that both categories are admissible
sub-categories of  the same triangulated category $\Db(X^{[2]})$
with their respective complements both built out of five copies of $\Db(X)$, cf.\ \cite[Ch.\ 7]{HuyCubics}.

\subsection{The conjecture for dense sets of cubics} Let us start by first noting the following easy result which is certainly
known to the experts, cf.\ \cite{Popov}. It proves Conjecture \ref{conj} for all smooth cubic fourfolds $X$
contained in the countable $\bigcup_{\rm{(\ast\ast\ast)}}\kc_d$ union of all Hassett divisors $\kc_d\subset \kc$ for which $d$ satisfies the
numerical condition $\rm{(\ast\!\ast\!\ast)}$, i.e.\ $d$ is of the form $(2n^2+2n+2)/a^2$ for certain $n$ and $a$, which geometrically means that the Fano variety $F_X$ is birational to a Hilbert scheme, cf.\ \cite[Ch.\ 6.5]{HuyCubics}.

\begin{thm}\label{thm1}
Assume the Fano variety $F_X$ of a smooth cubic fourfold $X\subset\PP^5$
is birational to the Hilbert scheme $S^{[2]}$ of a K3 surface. Then $\Db(F_X)\cong \ka_X^{[2]}$.
\end{thm}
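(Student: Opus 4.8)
The plan is to reduce the statement, via several well-known inputs, to the fact that birational hyperk\"ahler fourfolds of $K3^{[2]}$-type are derived equivalent. By Addington's rationality criterion, see \cite[Ch.\ 6.5]{HuyCubics}, the assumption that $F_X$ is birational to a Hilbert square $S^{[2]}$ is equivalent to $X\in\kc_d$ for some discriminant $d$ satisfying the numerical condition $(\ast\!\ast\!\ast)$, and a short arithmetic argument shows that $(\ast\!\ast\!\ast)$ implies the Hassett condition $(\ast\!\ast)$. Hence, by the theorem of Addington and Thomas \cite{AddTh} --- in the version valid for \emph{all} cubics in $\kc_d$, which rests on the construction of stability conditions in families --- the Kuznetsov component is geometric, $\ka_X\cong\Db(S_0)$ for an honest, untwisted K3 surface $S_0$. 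By \cite{BKR} this already yields $\ka_X^{[2]}\cong\Db(S_0^{[2]})$, so the task reduces to producing an equivalence $\Db(F_X)\cong\Db(S_0^{[2]})$.

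To do this I would first link the K3 surface $S$ of the hypothesis with $S_0$ through a chain of Hodge isometries of transcendental lattices
\[
T(S_0)\cong T(\ka_X)\cong T(X)\cong T(F_X)\cong T(S^{[2]})\cong T(S),
\]
where the first isometry comes from the equivalence $\ka_X\cong\Db(S_0)$; the second identifies $T(\ka_X)$ with the transcendental lattice of $X$, the Mukai--Hodge structure on $\ka_X$ being by definition the orthogonal complement of $\langle\ko_X,\ko_X(1),\ko_X(2)\rangle$ in the Mukai lattice of $X$; the third is Beauville--Donagi; the fourth holds because birational hyperk\"ahler manifolds have Hodge-isometric second cohomology; and the last uses that $H^2(S^{[2]},\ZZ)=H^2(S,\ZZ)\oplus\ZZ\delta$ with $\delta$ of Hodge type $(1,1)$ --- all up to the evident Tate twists, which cancel. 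The derived Torelli theorem for K3 surfaces \cite{Mukai, Orlov} then gives $\Db(S)\cong\Db(S_0)$, and hence $\Db(S^{[2]})\cong\Db(S_0^{[2]})$, once more by \cite{BKR} applied to a Fourier--Mukai kernel on $S\times S_0$ and an equivariant lift of it. It remains to observe that $F_X$ and $S^{[2]}$, being birational hyperk\"ahler fourfolds of $K3^{[2]}$-type, are derived equivalent: every birational map between two such is a composition of Mukai flops of planes $\PP^2$, and each Mukai flop induces a derived equivalence. Concatenating the equivalences produces $\Db(F_X)\cong\Db(S^{[2]})\cong\Db(S_0^{[2]})\cong\ka_X^{[2]}$, as desired; compare \cite{Popov}.

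The serious point is this last step, the passage from birationality to derived equivalence. It is an instance of the $K$-equivalence versus $D$-equivalence problem, open in general, and is accessible here only because the birational geometry of $K3^{[2]}$-type fourfolds is completely understood: the flopping contractions are Mukai flops along planes $\PP^2$, for which the derived equivalence is classical (Kawamata, Namikawa). Two smaller matters need attention: keeping track of the Tate twists along the Hodge-isometry chain, and invoking the refinement of \cite{AddTh} valid for \emph{every} cubic in $\kc_d$ rather than just a dense open subset, since the hypothesis may single out a non-generic cubic.
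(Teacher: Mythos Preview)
Your proof is correct and follows essentially the same route as the paper: birational hyperk\"ahler fourfolds are compositions of Mukai flops and hence derived equivalent by Kawamata--Namikawa, while $(\ast\!\ast\!\ast)\Rightarrow(\ast\ast)$ together with Addington--Thomas (extended to all of $\kc_d$ via \cite{BLMNPS}) gives $\ka_X\cong\Db(S_0)$, and BKR finishes. The only deviation is that the paper asserts one may take $S_0=S$ directly, so your detour through the chain of transcendental Hodge isometries and derived Torelli is not needed---though it is a perfectly valid way to avoid justifying that identification.
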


Indeed, a birational correspondence between two four-dimensional hyperk\"ahler manifolds
is a composition of simple Mukai flops which by results of Kawamata \cite{Ka}
and Namikawa \cite{Na} are known to induce derived equivalences, cf.\ \cite[Ch.\ 11]{HuyFM}.
Hence, $\Db(F_X)\cong \Db(S^{[2]})$. More general results have recently been proved by
Maulik, Shen, Yin, and Zhang \cite{MSYZ}. On the other hand, the existence of a birational correspondence $F_X\sim S^{[2]}$ also implies that $\ka_X\cong\Db(S)$. Indeed, by results of Addington \cite{Add} and Hassett \cite{Ha}, the existence of a birational correspondence $F_X\sim S^{[2]}$ is equivalent to $X$ being contained in a Hassett divisor $\kc_d$ with $d$ satisfying  $\rm{(\ast\!\ast\!\ast)}$. However, this immediately implies that $X\in \kc_d$
with $d$ satisfying Hassett's condition $\rm{(\ast\ast)}$, i.e.\ $d/2=\prod p^{n_p}$ with $n_p=0$ for all $p\equiv 2\,(3)$ and $n_3\leq1$, which by the work of Addington and Thomas \cite{AddTh} mentioned before and further work of Bayer et al \cite{BLMNPS} implies $\ka_X\cong\Db(S)$ for some K3 surface $S$ (in fact for the same $S$ for which $F_X\sim S^{[2]}$). Combined with the BKR-equivalence $\Db(S^{[2]})\cong\Db(S)^{[2]}$, see \cite{BKR},  we obtain $$\Db(F_X)\cong\Db(S^{[2]})\cong\Db(S)^{[2]}\cong\ka_X^{[2]}.$$

The main result of this note is the following theorem which proves Conjecture \ref{conj} 
for $X$ contained in another countable union $\bigcup_{\rm(\ast\ast'')}\kc_d$ of Hassett divisors.

\begin{thm}\label{thm2}
Assume the Fano variety $F_X$ admits a rational Lagrangian fibration and is generic with this property. Then $\Db(F_X)\cong\ka_X^{[2]}$.
\end{thm}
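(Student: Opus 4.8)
The plan is to realize $F_X$, up to birational modification and hence up to derived equivalence, as an explicit Lagrangian-fibered moduli space of twisted sheaves on a K3 surface, and then to match this moduli space with $\ka_X^{[2]}$ after passing through the twisted BKR equivalence. First one reduces to a regular fibration. Since $F_X$ carries a \emph{rational} Lagrangian fibration, the birational geometry of four-dimensional hyperk\"ahler manifolds --- all birational models are joined by chains of simple Mukai flops, and, by the minimal model program for hyperk\"ahler manifolds (Matsushita; Bayer and Macr\`i), one of them, say $M$, admits a \emph{regular} Lagrangian fibration $p\colon M\longrightarrow\PP^2$ --- together with the fact that Mukai flops induce derived equivalences (Kawamata \cite{Ka}, Namikawa \cite{Na}, exactly as in the proof of Theorem \ref{thm1}) gives $\Db(F_X)\cong\Db(M)$. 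It remains to prove $\Db(M)\cong\ka_X^{[2]}$.

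Next I would make everything explicit by lattice theory. The existence of a rational Lagrangian fibration forces $\Pic(F_X)$ to represent $0$ for the Beauville--Bogomolov--Fujiki form, which confines $X$ to a Hassett divisor $\kc_d$ with $d$ in an explicit arithmetic family $(\ast\ast'')$; via the identification $H^2(F_X,\ZZ)\cong v_F^{\perp}\subset\widetilde H(\ka_X,\ZZ)$ this isotropic class becomes an isotropic Hodge class in the Mukai lattice of $\ka_X$, so $\ka_X\cong\Db(S,\alpha)$ for a twisted K3 surface $(S,\alpha)$ --- in general with $\alpha\neq 1$, since $\alpha=1$ would put us in the situation of Theorem \ref{thm1} --- by the work of Addington--Thomas \cite{AddTh} and Bayer et al.\ \cite{BLMNPS} in their twisted versions (cf.\ \cite{HuyComp}). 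Transporting along $\ka_X\cong\Db(S,\alpha)$ the description of $F_X$ as a moduli space of stable objects in $\ka_X$ (Li--Pertusi--Zhao), and using genericity to fix $\Pic(F_X)$ to the expected rank-two lattice, both $F_X$ and the model $M$ become moduli spaces $M_{(S,\alpha)}(v)$ of $\alpha$-twisted Gieseker-stable sheaves with $v^2=2$; matching $p$ with the Mukai isometry forces $v$ to be a support class $v=(0,[C],\chi)$ with $C^2=2$, so that $M$ is the \emph{twisted Beauville--Mukai system}, the relative compactified Jacobian of the family of genus-two curves $|C|\cong\PP^2$ on $S$, with $p$ its support map.

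On the other side, $\ka_X^{[2]}\cong\Db(S,\alpha)^{[2]}$, and the twisted form of the BKR equivalence \cite{BKR} identifies this with $\Db(S^{[2]},\widehat\alpha)$ for the Brauer class $\widehat\alpha$ on the Hilbert square induced by $\alpha$. Thus Theorem \ref{thm2} is reduced to producing a twisted derived equivalence
\[
\Db\bigl(M_{(S,\alpha)}(0,[C],\chi)\bigr)\;\cong\;\Db\bigl(S^{[2]},\widehat\alpha\bigr),
\]
between two (Brauer-twisted) derived categories of hyperk\"ahler fourfolds whose weight-two Hodge structures are isometric --- both equal to the one of $\ka_X^{[2]}$, equivalently of $F_X$. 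To build it I would work fiberwise over the common base $\PP^2$: the left-hand side is by construction a relative compactified Jacobian, and using the autoduality of the compactified Jacobians of the genus-two curves in $|C|$ (Arinkin), the relative Poincar\'e sheaf, and the Abel--Jacobi and relative-Hilbert-scheme correspondences --- together with the $\PP^1$-bundle structure of the universal curve over $S$, which is precisely where the class $\alpha$ enters --- one assembles a kernel on $M_{(S,\alpha)}(0,[C],\chi)\times S^{[2]}$, twisted by $\widehat\alpha$ along the second factor, and checks by a spanning-class/base-change argument over $\PP^2$ that it is an equivalence.

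The main obstacle is this last construction, and specifically the bookkeeping of the gerbe data: the fiberwise kernel is essentially classical, but proving that the Brauer classes on the two moduli spaces correspond under it --- so that the equivalence genuinely lands in $\Db(S^{[2]},\widehat\alpha)$ and not in some other Brauer twist --- requires a precise computation of the obstruction classes of the relevant moduli spaces of twisted sheaves. The cleanest way to organize this should be to express the functor through the (quasi-)universal twisted sheaves and, following Markman's theory of modular sheaves on moduli spaces, to reduce the comparison of Brauer classes to an identity of characteristic classes in the twisted Mukai lattice of $(S,\alpha)$, which the genericity hypothesis turns into a finite lattice computation. A secondary difficulty, already present in the second step, is to verify that the Lagrangian-fibered model is attached to exactly the same twisted K3 surface $(S,\alpha)$ that computes $\ka_X$ --- rather than to some derived-equivalent but a priori different twisted K3 --- which should follow from carefully tracking the Mukai isometry and its orientation.
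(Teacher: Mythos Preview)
Your outline is correct and matches the paper's strategy: reduce to a twisted K3 surface $(S,\alpha)$ of degree two with $\ka_X\cong\Db(S,\alpha)$, realise $F_X$ birationally as a twisted Beauville--Mukai system $M_\alpha=M_{S,\alpha}(0,H,\chi)$ over $|H|\cong\PP^2$, invoke twisted BKR to write $\ka_X^{[2]}\cong\Db(S^{[2]},\alpha^{[2]})$, and then establish $\Db(M_\alpha)\cong\Db(S^{[2]},\alpha^{[2]})$ using Arinkin-type autoduality. You also correctly locate the main obstacle in the Brauer bookkeeping.

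The substantive difference is how the paper organises that last equivalence, and this organisation is what makes the gerbe computation tractable. Rather than building a single kernel on $M_\alpha\times S^{[2]}$ and checking the twist by hand (your Markman/characteristic-class proposal), the paper factors the equivalence through the \emph{untwisted} compactified Jacobian $M=M_S(0,H,-1)$ equipped with a Brauer class: first $\Db(M_\alpha)\cong\Db(M,\theta)$, where $\theta$ is the obstruction to a universal sheaf on $M\times_{|H|}M_\alpha$, which is exactly the compactified twisted Arinkin equivalence already available as a black box (\cite{Arinkin}, \cite{ADM}, \cite{Bottini}); second, the classical Mukai flop $M\sim S^{[2]}$ via the relative Hilbert scheme $\kc^{[2]}$, specific to degree two, combined with the twisted Kawamata--Namikawa theorem gives $\Db(M,\theta)\cong\Db(S^{[2]},\alpha^{[2]})$, \emph{once one shows} that $\theta$ and $\alpha^{[2]}$ correspond under the flop. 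This last identification is a short argument using Bhatt's description of the obstruction class (appendix to \cite{KLB}) together with the map $\Br(S)\to\Br(\kc)\to\Br(\kc^{[2]})\to\Br(\Pic^0)$, rather than a Markman-style computation. A secondary difference: the paper obtains the birational model $F_X\sim M_\alpha$ and the equivalence $\ka_X\cong\Db(S,\alpha)$ via the Tate--{\v S}afarevi{\v c} description of Lagrangian fibrations in \cite{HuMa}, bypassing stability conditions; your route through Li--Pertusi--Zhao and Bayer--Macr\`i would also work but is heavier.
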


The existence of a rational Lagrangian fibration of $F_X$ is equivalent to requiring
$X\in \kc_d$ with $d/2$ a perfect square, see \cite[Cor.\ 6.5.24]{HuyCubics}, i.e.\
$d/2=\prod p^{n_p}$ with  $n_p\equiv0\, (2)$ for all $p$. This numerical condition is called $\rm(\ast\ast'')$. According to \cite{HuyComp}, the weaker condition $\rm (\ast\ast')$ that $n_p\equiv 0\,(2)$ only for all $p\equiv2\, (3)$ is known to be equivalent to $\ka_X\cong\Db(S,\alpha)$ for some twisted K3 surface $(S,\alpha)$. Again, this was originally proved only for generic $X$ but now holds for all $X$ by \cite{BLMNPS}.\smallskip

Note the following chain of implications between these numerical conditions
$$\xymatrix@R=12pt@C=15pt{\stackrel{{\color{red}\checkmark}}{{\rm(\ast\!\ast\!\ast)}}\ar@{=>}[r]&{\rm (\ast\ast)}\ar@{=>}[r]&{\rm (\ast\ast')}&\stackrel{{\color{red}\checkmark}}{{\rm (\ast\ast'')}}\ar@{=>}[l].}$$
We marked those for which, by Theorems \ref{thm1} and \ref{thm2}, Conjecture \ref{conj} has now been verified.
Naturally, one would expect the case ${\rm (\ast\ast)}$, i.e.\ when $\ka_X\cong\Db(S)$,
to be the  most accessible one among all cubics for Conjecture \ref{conj}. However, even this case remains open in full generality and the smallest such $d$ for which the conjecture is not settled by the results above is $d=74$. It would be even more interesting to settle the conjecture when
$\ka_X$ is of the form  $\Db(S,\alpha)$, but not $\Db(S)$, so  ${\rm (\ast\ast')}$ holds but not ${\rm (\ast\ast)}$.  Here, the smallest such $d$ for which also ${\rm (\ast\ast'')}$ does not hold is $d=24$, in which case $\ka_X\cong\Db(S,\alpha)$ with $S$ a K3 surface of degree six and $\alpha$ of order two. None of the available techniques seem to say anything about 
cases beyond ${\rm (\ast\ast')}$, so without any K3 surface around, with the smallest possible value being $d=12$.\smallskip

To the best of our knowledge, not a single smooth cubic fourfold $X$ is known for which 
(\ref{eqn:conj}) has been established and which is not covered by one of the above  two theorems.
\subsection{Hodge theoretic evidence} Already for K3 surfaces, Mukai \cite{Mukai} and Orlov \cite{Orlov} observed that 
$\Db(S)$ is uniquely determined by the Hodge structure  $\widetilde H(S,\ZZ)$ of weight two
which is essentially $H^\ast(S,\ZZ)$ with the hyperbolic plane $(H^0\oplus H^4)(S,\ZZ)\cong U$ declared to be of type $(1,1)$. For the K3 category $\ka_X$ one defines a similar
Hodge structure $\widetilde H(\ka_X,\ZZ)$ of weight two as the orthogonal
complement of the three line bundles $\ko_X(i)$, $i=0,1,2$, in $K_{\text{top}}(X)$ with the $(2,0)$-part given by the pre-image of $H^{3,1}(X)$ under the Mukai vector
$v\colon K_{\text{top}}(X)\to H^\ast(X,\QQ)$, cf.\ \cite[Ch.\ 5.4]{HuyCubics}.\smallskip

We define similarly the \emph{naive Hodge structure} $\widetilde H(Z,\ZZ)$ of weight two of a hyperk\"ahler manifold $Z$ of ${\rm K3}^{[2]}$-type  as $$\widetilde H(Z,\ZZ)\coloneqq H^2(Z,\ZZ)\oplus U$$ with $U\subset \widetilde H^{1,1}(Z,\ZZ)$. However, as discussed first by Taelman \cite{Taelman} and later by Beckmann \cite{Beck}, the structure has to be twisted slightly in order to render it useful in the study of derived categories. Luckily,
as we will argue, this
additional twist can be ignored for $F_X$, cf.\ \S\! \ref{sec:BTtwist}.\smallskip

For a K3 surface $S$ itself, one thus has $\widetilde H(S^{[2]},\ZZ)=\widetilde H(S,\ZZ)\oplus \ZZ\cdot\delta$ with $2\delta$ being the class of the exceptional divisor of the Hilbert--Chow morphism $S^{[2]}\to S^{(2)}$. Imitating this description, we define the naive weight two Hodge structure of the Hilbert square of $\ka_X$ as $$\widetilde H(\ka_X^{[2]},\ZZ)\coloneqq \widetilde H(\ka_X,\ZZ)\oplus^\perp \ZZ\cdot \delta,$$
where $\delta$ is by definition an integral class of type $(1,1)$ with $(\delta)^2=-2$.
Clearly, if $\ka_X\cong\ka_{X'}$, then also $\ka_X^{[2]}\cong\ka_{X'}^{[2]}$, which is
reflected by a Hodge isometry $\widetilde H(\ka_X^{[2]},\ZZ)\cong\widetilde H(\ka_{X'}^{[2]},\ZZ)$. It is expected that any Fourier--Mukai equivalence
$\ka_X^{[2]}\cong\ka_{X'}^{[2]}$ between just the Hilbert squares also induces such a Hodge isometry, cf.\ \S\! \ref{sec:MukaiLattice}.  In the same sense, also the following result is seen as evidence for (\ref{eqn:conj}).

\begin{thm}\label{thm3}
For every smooth cubic fourfold $X\subset\PP^5$ there exists
a Hodge isometry between the naturally associated naive weight two
Hodge structures:
$$\widetilde H(F_X,\ZZ)\cong \widetilde H(\ka_X^{[2]},\ZZ).$$
\end{thm}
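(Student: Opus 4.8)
The plan is to reduce the statement to a comparison of transcendental lattices, after which only standard lattice theory remains. As an orienting remark, both $\widetilde H(F_X,\ZZ)=H^2(F_X,\ZZ)\oplus^\perp U$ and $\widetilde H(\ka_X^{[2]},\ZZ)=\widetilde H(\ka_X,\ZZ)\oplus^\perp\ZZ\delta$ are abstractly isometric to the lattice $\wL\coloneqq U^{\oplus 4}\oplus E_8(-1)^{\oplus 2}\oplus\langle-2\rangle$: indeed $H^2(F_X,\ZZ)$ is the $\mathrm{K3}^{[2]}$-lattice $U^{\oplus 3}\oplus E_8(-1)^{\oplus 2}\oplus\langle-2\rangle$, while $\widetilde H(\ka_X,\ZZ)$ is the K3 Mukai lattice $U^{\oplus 4}\oplus E_8(-1)^{\oplus 2}$. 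Thus both sides become polarised weight-two Hodge structures of K3 type, i.e.\ with $h^{2,0}=1$, on one and the same lattice $\wL$, and after such identifications a lattice automorphism of $\wL$ is a Hodge isometry $\widetilde H(F_X,\ZZ)\isomor\widetilde H(\ka_X^{[2]},\ZZ)$ precisely when it maps the period line $\widetilde H^{2,0}(F_X)\subset\wL_\CC$ onto $\widetilde H^{2,0}(\ka_X^{[2]})\subset\wL_\CC$. Since each period line lies in the complexification of the corresponding transcendental lattice, it suffices to produce an isometry of $\wL$ identifying the two transcendental lattices compatibly with their Hodge structures.

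The Hodge-theoretic input is classical. By the Beauville--Donagi isomorphism the Abel--Jacobi map identifies $H^2_{\rm prim}(F_X,\ZZ)$, primitive with respect to the Pl\"ucker polarisation $g$ with $q(g)=6$, with $H^4_{\rm prim}(X,\ZZ)(1)$ as a polarised weight-two Hodge structure -- up to a sign of the form, hence genuinely, both lattices having signature $(2,20)$. On the other side, the Mukai-lattice description of $\ka_X$, cf.\ \cite[Ch.~5--6]{HuyCubics} and \cite{Add}, provides classes $\lambda_1,\lambda_2\in\widetilde H(\ka_X,\ZZ)$ of type $(1,1)$ spanning a primitive sublattice $A_2$ whose orthogonal complement $\langle\lambda_1,\lambda_2\rangle^\perp$ is Hodge-isometric to $H^4_{\rm prim}(X,\ZZ)(1)$ as well. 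Composing, we obtain a Hodge isometry $\phi\colon H^2_{\rm prim}(F_X,\ZZ)\isomor\langle\lambda_1,\lambda_2\rangle^\perp$. As $\widetilde H^{2,0}(F_X)$ is orthogonal to $g$ and to the added hyperbolic plane $U$, the transcendental lattice of $\widetilde H(F_X,\ZZ)$ coincides with that of $H^2_{\rm prim}(F_X,\ZZ)$; likewise, since $\lambda_1,\lambda_2$ and $\delta$ are of type $(1,1)$, the transcendental lattice of $\widetilde H(\ka_X^{[2]},\ZZ)$ coincides with that of $\langle\lambda_1,\lambda_2\rangle^\perp$. Hence $\phi$ restricts to a Hodge isometry between the two transcendental lattices.

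It remains to extend $\phi$ to an isometry $\widetilde H(F_X,\ZZ)\isomor\widetilde H(\ka_X^{[2]},\ZZ)$, which by the first paragraph is then automatically a Hodge isometry. I would do this via Nikulin's gluing formalism, the key point being that $\phi$ already identifies the rank-$22$ primitive sublattices $H^2_{\rm prim}(F_X,\ZZ)$ and $\langle\lambda_1,\lambda_2\rangle^\perp$, whose rank, crucially, does not depend on $X$. Their orthogonal complements in the two ambient lattices are $\ZZ g\oplus^\perp U\cong\langle6\rangle\oplus U$ and $A_2\oplus^\perp\ZZ\delta\cong A_2\oplus\langle-2\rangle$; these even indefinite lattices both have rank $3$, signature $(2,1)$ and the same discriminant form, namely that of $\langle6\rangle$, hence are isometric by Nikulin. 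Each ambient lattice is an index-three overlattice of the orthogonal direct sum of its two pieces, so corresponds to an isotropic subgroup $\ZZ/3$ of the combined discriminant group; as the $3$-torsion of the latter is $\ZZ/3\oplus\ZZ/3$, there are exactly two such subgroups, interchanged by $-\id$ on either summand. Since $-\id$ acts on $H^2_{\rm prim}(F_X,\ZZ)$ as a Hodge isometry and on the complements as an isometry, one may adjust either $\phi$ or the identification of complements so that the two gluings agree; then $\phi$ extends to the desired Hodge isometry $\widetilde H(F_X,\ZZ)\isomor\widetilde H(\ka_X^{[2]},\ZZ)$.

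The main obstacle is to carry out the lattice bookkeeping of the last step uniformly in $X$: one must pin down the discriminant forms of $H^2_{\rm prim}(F_X,\ZZ)$ and of $\langle\lambda_1,\lambda_2\rangle^\perp$ -- both governed by the single prime $3$, originating from $(h^2)^2=3$ on $H^4(X,\ZZ)$ -- and of the two rank-three complements, verify the isometry of the complements and the classification of index-three overlattices by Nikulin's numerical criteria, which are comfortably met here (rank three against discriminant length one), and check that the two overlattice gluings are conjugate. Everything else is an assembly of the Beauville--Donagi isomorphism, the Mukai-lattice picture of $\ka_X$, and the elementary fact that a K3-type Hodge structure on a fixed lattice is determined by its period line.
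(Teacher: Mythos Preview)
Your argument is correct, and the underlying Hodge input---identifying both $H^2_{\rm prim}(F_X,\ZZ)$ and $\langle\lambda_1,\lambda_2\rangle^\perp$ with $H^4_{\rm prim}(X,\ZZ)$ up to sign---is exactly what the paper uses as well (the paper packages this via Addington's isometry $H^2(F_X,\ZZ)\cong\lambda_1^\perp\subset\widetilde H(\ka_X,\ZZ)$). The difference lies entirely in how the lattice extension is carried out.

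You stay at rank $25$: you match the fixed rank-$22$ primitive pieces, show by Nikulin's genus theorem that the rank-$3$ complements $\ZZ g\oplus U$ and $A_2\oplus\ZZ\delta$ are isometric, and then glue by analysing the two index-$3$ overlattices and the two possible isotropic $\ZZ/3$-subgroups in the $3$-torsion $\ZZ/3\oplus\ZZ/3$ of the combined discriminant. The paper instead goes \emph{up} to the rank-$26$ unimodular lattice $\widetilde H(\ka_X,\ZZ)\oplus U$ and realises both sides simultaneously as $\lambda_1^\perp$ and $\gamma^\perp$ for two $(+2)$-classes $\lambda_1\in A_2$ and $\gamma=e+f\in U$; it then extends the Hodge isometry $T(F_X)\cong T(\ka_X^{[2]})$ together with $\lambda_1\mapsto\gamma$ to an isometry of the whole unimodular ambient lattice using Nikulin's extension theorem (the complement visibly contains the added $U$), and finally restricts to the orthogonal complements.

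What each approach buys: the paper's route avoids the explicit discriminant/gluing bookkeeping entirely, since in a unimodular ambient the extension criterion is clean; your route is more hands-on and, by working with the fixed rank-$22$ sublattice rather than the transcendental lattice of variable rank, makes the uniformity in $X$ transparent. Amusingly, the abstract isometry $\ZZ g\oplus U\cong A_2\oplus\ZZ\delta$ that is central to your argument is noted by the paper only in a remark \emph{after} the proof---it is not needed for their version. One small caution on your gluing step: the claim ``exactly two isotropic $\ZZ/3$-subgroups'' is correct but not automatic; it follows because the two $\ZZ/3$-valued forms on the discriminants of the rank-$22$ and rank-$3$ pieces are opposite (as forced by the existence of at least one such subgroup), and then indeed $-\id$ on either factor swaps them.
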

We hope that this result will allow us eventually to detect the natural derived involution of $\Db(F_X)$, first cohomologically and then on the level of categories, that would come from an equivalence $\Db(F_X)\cong\ka_X^{[2]}$ and a result of Elagin \cite{Elagin}, see \S\! \ref{sec:sigma}.

\subsection{Conventions and background}  Throughout, $X\subset\PP^5$ 
and $S$ denote a smooth cubic hypersurface of dimension four and a projective K3 surface. Furthermore, $F_X$ is the Fano variety of lines contained in $X$, see \cite{HuyCubics} for a general reference, and $S^{[2]}$ denotes the Hilbert scheme of length two subschemes in $S$.\smallskip

 By $\Db(X)$ and $\Db(S)$ we denote the bounded derived categories of coherent sheaves on $X$ resp.\ $S$ and $\ka_X\subset\Db(X)$ is the Kuznetsov component. We will also freely work with derived categories $\Db(S,\alpha)$ of $\alpha$-twisted sheaves, \cite[Ch.\ 16.4]{HuyK3} for a survey and references, and with moduli spaces of semi-stable $\alpha$-twisted sheaves, see e.g.\ \cite{Yos}.\smallskip

The categorical arguments all work over any field, but as soon as Hodge theory is involved we have to work over the complex numbers.  In particular, 
$\widetilde H(S,\alpha,\ZZ)$ is the Hodge structure of a twisted K3 surface $(S,\alpha)$ and $\widetilde H(\ka_X,\ZZ)$ is the K3 Hodge structure associated with $\ka_X$.\smallskip

The Hassett divisor $\kc_d$ in the moduli space of smooth cubic fourfolds $\kc$ parametrises all those cubics $X$ for which the algebraic part $H^{2,2}(X,\ZZ)$
contains a certain sublattice $K_d$ of rank two and discriminant $d$, see the orginal \cite{Ha} or the survey \cite[Ch.\ 6.5]{HuyCubics}.


\section{The Hilbert square of the Kuznetsov component}

The main goal of this article is to study the link between $\Db(F_X)$ and $\ka_X^{[2]}$
for a smooth cubic fourfold $X\subset \PP^5$. We focus on four-dimensional cubic hypersurfaces, as in any other dimension
the link between the two categories will be less direct. For example, for a smooth cubic threefold $Y\subset\PP^4$ the classes of these two categories in the Grothendieck group satisfy  $[\ka_Y^{[2]}]=[\Db(F_Y)]+[\Db(Y)]\in K_0(\text{dg-cat})$.\smallskip

In this section we shall recall the definition of the Hilbert square $\ka_X^{[2]}$ and introduce its Hodge structure. The main result is a description of $\ka_X^{[2]}$ in the case that $\ka_X$ is the derived category of a twisted K3 surface $(S,\alpha)$.

\subsection{Defining the Hilbert square}\label{sec:Hilbsquare}
Symmetric powers of dg-categories have first been considered by Ganter and Kapranov \cite{GK} and  Elagin \cite[Thm.\ 6.10]{Elagin},
see also the more recent treatment by Beckmann and Oberdieck \cite{BeckOber}. For admissible sub-categories of derived
categories of coherent sheaves, the construction can be made more explicit and more geometric as follows.\smallskip

Starting with the semi-orthogonal decomposition, cf.\ \cite[Ch.\ 7.3]{HuyCubics},
$$\Db(X)=\langle\ka_X,\ko_X,\ko_X(1),\ko_X(2)\rangle,$$ Kuznetsov \cite{Kuz} defines the admissible sub-category
$\ka_X\boxtimes\ka_X\subset\Db(X\times X)$ as the right orthogonal complement
of all objects of the form $\kf\boxtimes\ko_X(i)$ and $\ko_X(i)\boxtimes\kf$ with $i=0,1,2$
and $\kf\in \Db(X)$ arbitrary. The natural ${\mathfrak S}_2$-action on $X\times X$
defined by $\iota(x_1,x_2)=(x_2,x_1)$
induces an action of ${\mathfrak S}_2$ on $\ka_X\boxtimes\ka_X$ and  the Hilbert (or symmetric) square is defined as the equivariant category
$$\ka_X^{[2]}\coloneqq(\ka_X\boxtimes\ka_X)_{{\mathfrak S}_2}.$$
Thus, objects in $\ka_X^{[2]}$ are objects in $\ka_X\boxtimes \ka_X\subset\Db(X\times X)$ together with an ${\mathfrak S}_2$-linearisation and morphisms are the invariant
ones in $\ka_X\boxtimes\ka_X$. The category $\ka_X^{[2]}$ inherits the structure of a linear triangulated category, cf.\  \cite{Elagin}.

\begin{remark}
The higher symmetric powers $\ka_X^{[n]}$ are defined similarly by first defining
$\ka^{\,\boxtimes\, n}\subset \Db(X^n)$ and then taking its equivariant category with respect to the natural action of ${\mathfrak S}_n$. 
Popov \cite{Popov} extends Galkin's conjecture to the prediction that the LLSvS variety $Z_X$, a hyperk\"ahler manifold deformation equivalent to the eight-dimensional Hilbert scheme of a K3 surface, cf.\ \cite{LLSvS}, is derived equivalent to $\ka_X^{[4]}$.
\end{remark}

Let us also recall that $\ka_X^{[2]}$ comes in turn with an involution $\sign\colon \ka_X^{[2]}\congpf \ka_X^{[2]}$. By definition,
it sends an invariant object $E\in \ka_X\boxtimes\ka_X$ with a linearisation $\rho\colon E\to \iota^\ast E$  to $E$ with the linearisation $-\rho$. As proved by Elagin \cite{Elagin},
the equivariant category of $\ka_X^{[2]}$ with respect to this involution gives back $\ka_X\boxtimes\ka_X$:
$$(\ka_X^{[2]})_{{\mathfrak S}_2}\cong\ka_X\boxtimes\ka_X.$$
In \S\! \ref{sec:sigma} we will discuss the action of $\sigma$ on the cohomology of $\ka_X^{[2]}$.

\subsection{Twisted BKR}
Bridgeland, King, and Reid \cite{BKR} describe the derived category $\Db(S^{[n]})$ of the Hilbert scheme of a K3 surface as the equivariant category of $S^n$ with respect to the natural ${\mathfrak S}_n$-action. Since, $\Db(S^{n})\cong \Db(S)^{\,\boxtimes\, n}
$, we have 
\begin{equation}\label{eqn:BKR}
\Db(S^{[n]})\cong\Db(S)^{[n]}.
\end{equation}

 We will need a twisted version of this fact. More precisely, if $S$ comes with a Brauer class $\alpha\in \Br(S)$ and
$\alpha^{[n]}\in \Br(S^{[n]})$ is the induced class via the natural isomorphism
$\Br(S)\cong\Br(S^{[n]})$, then the result we need compares
$\Db(S^{[n]},\alpha^{[n]})$ with the Hilbert product $\Db(S,\alpha)^{[n]}$.

\begin{prop}\label{prop:twBKR}
Any exact linear equivalence $\ka_X\cong\Db(S,\alpha)$ 
naturally induces an exact linear equivalence
$$\ka_X^{[n]}\cong\Db(S^{[n]},\alpha^{[n]}).$$
\end{prop}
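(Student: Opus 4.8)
The plan is to route the argument through the twisted K3 surface in two separate steps and then compose them: first, to deduce from the hypothesis $\ka_X\cong\Db(S,\alpha)$ alone that $\ka_X^{[n]}\cong\Db(S,\alpha)^{[n]}$, where $\Db(S,\alpha)^{[n]}\coloneqq(\Db(S,\alpha)^{\boxtimes n})_{{\mathfrak S}_n}$; and second, to prove the twisted Bridgeland--King--Reid statement $\Db(S,\alpha)^{[n]}\cong\Db(S^{[n]},\alpha^{[n]})$, i.e.\ the twist of the equivalence (\ref{eqn:BKR}). Throughout I freely use the K\"unneth identification $\Db(S,\alpha)^{\boxtimes n}\cong\Db(S^n,\alpha^{\boxtimes n})$ for twisted derived categories.

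For the first step, the key facts are that the symmetric-power construction $\kc\rightsquigarrow\kc^{[n]}$ is functorial in exact equivalences of categories with a (unique) dg enhancement, cf.\ \cite{GK,Elagin,BeckOber}, and that for an admissible subcategory of the derived category of a smooth projective variety it is computed by Kuznetsov's geometric model $(\kc^{\boxtimes n})_{{\mathfrak S}_n}$ recalled in \S\ref{sec:Hilbsquare}. Since $\ka_X$ and $\Db(S,\alpha)$ both carry canonical enhancements --- the former induced from $\Db(X)$ --- the hypothesis $\ka_X\cong\Db(S,\alpha)$ already yields $\ka_X^{[n]}\cong\Db(S,\alpha)^{[n]}$. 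Concretely, and closer to the geometric language of the paper, one may represent the equivalence by a twisted Fourier--Mukai kernel $P$ on $X\times S$ vanishing on $\langle\ko_X,\ko_X(1),\ko_X(2)\rangle$; its $n$-fold external product $P^{\boxtimes n}$ on $X^n\times S^n$ carries a tautological ${\mathfrak S}_n$-linearisation for the diagonal permutation action on the $X$- and on the $S$-factors, and --- using Kuznetsov's description of $\ka_X^{\boxtimes n}\subset\Db(X^n)$ as the appropriate orthogonal complement --- the induced Fourier--Mukai functor restricts to an ${\mathfrak S}_n$-equivariant equivalence $\ka_X^{\boxtimes n}\cong\Db(S^n,\alpha^{\boxtimes n})$; passing to ${\mathfrak S}_n$-invariant objects gives $\ka_X^{[n]}\cong\Db(S,\alpha)^{[n]}$.

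For the second step, recall that (\ref{eqn:BKR}), i.e.\ $\Db(S^{[n]})\cong\Db(S)^{[n]}=\Db_{{\mathfrak S}_n}(S^n)$, is realised by a Fourier--Mukai functor whose kernel is the structure sheaf of the reduced fibre product $W\coloneqq\bigl(S^{[n]}\times_{S^{(n)}}S^n\bigr)_{\mathrm{red}}$, an honest coherent sheaf on a closed subvariety of $S^{[n]}\times S^n$. Tensoring a kernel by an untwisted sheaf does not obstruct twisting, so the same functor defines an equivalence $\Db(S^{[n]},\gamma)\cong\Db_{{\mathfrak S}_n}(S^n,\delta)$ for any Brauer classes $\gamma$, $\delta$ with equal pullback to $W$. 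It then suffices to check that $\gamma=\alpha^{[n]}$ and $\delta=\alpha^{\boxtimes n}$ have this property. Both descend from one class on $S^{(n)}$: the ${\mathfrak S}_n$-equivariant class $\alpha^{\boxtimes n}$ defines a class on $[S^n/{\mathfrak S}_n]$ and, via the coarse moduli map, on $S^{(n)}$, while $\alpha^{[n]}$ is by definition the image of that same class under the canonical isomorphism $\Br(S^{(n)})\cong\Br(S^{[n]})$; and $W$ lies over $S^{(n)}$ compatibly through both of its projections, so the two pullbacks coincide. This gives $\Db(S^{[n]},\alpha^{[n]})\cong\Db_{{\mathfrak S}_n}(S^n,\alpha^{\boxtimes n})=\Db(S,\alpha)^{[n]}$, and composing with the first step proves the proposition.

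The step I expect to need the most care is the Brauer-class bookkeeping in the second one: one has to verify that the isomorphism $\Br(S^{[n]})\cong\Br(S^n)^{{\mathfrak S}_n}$ induced by the McKay kernel $\ko_W$ is precisely the one built into the definition of $\alpha^{[n]}$, so that no residual gerbe twist is created; the heart of this is that the square relating $W$, $S^{[n]}$, $S^n$ and $S^{(n)}$ is compatible with all the twists. For $n\ge 3$ there is the additional nuisance of tracking the ${\mathfrak S}_n$-linearisation data through the whole construction, which is automatic for the external-product kernels above but would need attention in a purely hands-on argument. Everything else --- functoriality of symmetric powers, Kuznetsov's $\ka_X^{\boxtimes n}$, K\"unneth for twisted sheaves, and the classical BKR correspondence of \cite{BKR} --- can be quoted from the literature.
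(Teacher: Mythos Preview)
Your approach is essentially the same as the paper's: reduce to the twisted K3 side via functoriality of symmetric powers, then run a twisted version of BKR using the structure sheaf of the isospectral Hilbert scheme $W=\kz_n$ as kernel, with the Brauer-class compatibility coming from a common class $\alpha^{(n)}\in\Br(S^{(n)})$ that pulls back to $\alpha^{[n]}$ and $\alpha^{\boxtimes n}$ along the two legs.

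The one place where you are too quick is the sentence ``so the same functor defines an equivalence $\Db(S^{[n]},\gamma)\cong\Db_{{\mathfrak S}_n}(S^n,\delta)$.'' Knowing that $\ko_W$ makes sense as an $(\alpha^{[n]})^{-1}\times\alpha^{\boxtimes n}$-twisted kernel only tells you the functor $\Phi$ is \emph{defined}; it does not by itself say $\Phi$ is an equivalence. In general there is no principle that a Fourier--Mukai equivalence survives twisting the kernel, even when the twist is trivial on the support. The paper supplies the missing argument: the composition $\Phi_L\circ\Phi$ has a kernel $\kq$ with an adjunction map $\kq\to\ko_\Delta$ which, \emph{locally over} $S^{(n)}$, is literally the untwisted BKR adjunction map (because the twist is pulled back from $S^{(n)}$ and hence can be trivialised locally there); so it is an isomorphism and $\Phi$ is fully faithful. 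Essential surjectivity then follows from the target $\Db(S^n,\alpha^{\boxtimes n})_{{\mathfrak S}_n}$ being a connected Calabi--Yau category, cf.\ \cite[\S 5]{BeckOber}. You should insert this step; the Brauer bookkeeping you flagged is in fact the easier part, and it is the ``locally over $S^{(n)}$ the twist disappears'' observation that makes the whole thing work.
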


\begin{proof} By definition, $\ka_X^{[n]}$ is the equivariant category of the
product $$\ka_X^{\,\boxtimes\, n}=\Db(S,\alpha)^{\,\boxtimes\, n}\cong\Db(S^n,\alpha^{\times n})$$ with respect to the natural ${\mathfrak S}_n$-action, where we use the shorthand $\alpha^{\times n}\coloneqq \prod p_i^\ast\alpha$. We will use that there exists a Brauer class $\alpha^{(n)}\in \Br(S^{(n)})$ that pulls back to
$\alpha^{[n]}$ and $\alpha^{\times n}$ under the two projections
$$\xymatrix{S^n\ar[r]&S^{(n)}&\ar[l] S^{[n]},}$$
cf.\ \cite[\S\! 3.4]{HuyPI}.\smallskip

The BKR-equivalence (\ref{eqn:BKR}) is given by the equivariant Fourier--Mukai kernel
$\ko_{\kz_n}$, where $\kz_n\subset S^{[n]}\times S^n$ is Haiman's isospectral Hilbert scheme, i.e.\ $\kz_n$ is the reduction of the fibre product of $S^{[n]}\times_{S^{(n)}}S^n$,
cf.\ \cite[Thm.\ 6]{Haiman} or \cite[Cor.\ 5.5, preprint]{Krug}.\footnote{For $n=2$, the case of interest to us, the second projection of the subscheme $\kz_2\subset S^{[2]}\times (S\times S)$ is simply the blow-up $\text{Bl}_\Delta(S\times S)\to S\times S$ along the diagonal  and the first projection $\kz_2\cong\text{Bl}_\Delta(S\times S)\to S^{[2]}$  is the quotient by the ${\mathfrak S}_2$-action with ramification along the exceptional divisor.} In particular, the restriction of $(\alpha^{[n]})^{-1}\times \alpha^{\times n}\in \Br(S^{[n]}\times S^n)$ to $\kz_n$ is trivial and we can use $\ko_{\kz_n}$ 
as an equivariant
Fourier--Mukai kernel to get an exact functor
$$\Phi\colon \Db(S^{[n]},\alpha^{[n]})\to\Db(S^n,\alpha^{\times n})_{{\mathfrak S}_n}.$$
The composition $\Phi_L\circ\Phi$ of $\Phi$ with its left adjoint functor is a Fourier--Mukai endo-functor
of $\Db(S^{[n]},\alpha^{[n]})$ with a certain kernel $\kq$ and adjunction defines
a natural map $\varphi\colon\kq\to\ko_{\Delta}$ in the category $\Db(S^{[n]}\times S^{[n]},(\alpha^{[n]})^{-1}\times\alpha^{[n]})$. Locally over $S^{(n)}$, the map $\varphi$ is the adjunction morphism in the untwisted BKR-equivalence and thus an isomorphism. Therefore, $\Phi$ is fully faithful, cf.\ \cite[Cor.\ 1.23]{HuyFM}. Since $\Db(S^n,\alpha^{\times n})_{{\mathfrak S}_n}$
is a connected Calabi--Yau category, cf.\ \cite[\S\! 5]{BeckOber}, this suffices to deduce that $\Phi$ is actually an equivalence.
\end{proof}


\subsection{The Mukai lattice of the Hilbert square}\label{sec:MukaiLattice}
Recall from the introduction the definition of the Mukai lattice
$$\widetilde H(\ka_X^{[2]},\ZZ)\coloneqq \widetilde H(\ka_X,\ZZ)\oplus^\perp\ZZ\cdot\delta,$$
 of the Hilbert square $\ka_X^{[2]}$,\TBC{Could we also motivate this by arguing
 $\ka_X^{[2]}\subset\Db(X^{[2]})$ and then use the cohomology of $X^{[2]}$?}
which is naturally a Hodge structure of K3 type with $\delta$ of type $(1,1)$.
The definition was motivated by the fact that for $\ka_X\cong\Db(S)$, one
has $$\ka_X^{[2]}\cong\Db(S)^{[2]}\cong\Db(S^{[2]}),$$ see Proposition \ref{prop:twBKR},
and $$\widetilde H(S^{[2]},\ZZ)\cong \widetilde H(S,\ZZ)\oplus^\perp\ZZ\cdot \delta.$$
In fact, this determines $\widetilde H(\ka_X^{[2]},\ZZ)$ uniquely in the following sense.
The construction that associates with $X$ the Hodge structure $\widetilde H(\ka_X^{[2]},\ZZ)$ defines a morphism 
\begin{equation}\label{eqn:Period}
\kc\to D/{\rm O}(\Gamma),~X\mapsto \widetilde H(\ka_X^{[2]},\ZZ)
\end{equation} from the moduli space of smooth cubic fourfolds $X\subset\PP^5$ to the moduli space of Hodge structures of K3 type on the lattice $\Gamma\coloneqq\widetilde H({\rm K3},\ZZ)\oplus\ZZ(-2)$.
As a direct consequence of the density of the set of $X\in\kc$ with $\ka_X\cong \Db(S)$,
one then easily proves the next result.

\begin{lem} Assume $\kp\colon\kc\to D/{\rm O}(\Gamma)$ is a continuous
map such that for any $X\in\kc$ with $\ka_X\cong\Db(S)$ for some K3 surface $S$ the isomorphism type of the Hodge structure given by $\kp(X)$ is Hodge isometric to $\widetilde H(S^{[2]},\ZZ)$. Then $\kp$ coincides with {\rm(\ref{eqn:Period})}.\qed
\end{lem}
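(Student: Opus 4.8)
The plan is to reduce the statement to the density of the ``K3-locus'' in $\kc$ together with the observation that, on that locus, the isomorphism class of $\widetilde H(\ka_X^{[2]},\ZZ)$ is already pinned down by the hypothesis. Write $\kc_{\Db}\subset\kc$ for the set of cubics with $\ka_X\cong\Db(S)$ for some untwisted K3 surface $S$; by Addington--Thomas \cite{AddTh} and \cite{BLMNPS} this is the union $\bigcup\kc_d$ over all $d$ satisfying Hassett's condition ${\rm (\ast\ast)}$, and this union is dense in $\kc$ for the classical topology. First I would record that on $\kc_{\Db}$ both $\kp$ and the map \eqref{eqn:Period} agree: an equivalence $\ka_X\cong\Db(S)$ induces a Hodge isometry $\widetilde H(\ka_X,\ZZ)\cong\widetilde H(S,\ZZ)$ of the associated weight-two K3 Hodge structures, so, directly from the definition of the naive Hodge structure of a Hilbert square together with $\widetilde H(S^{[2]},\ZZ)\cong\widetilde H(S,\ZZ)\oplus^\perp\ZZ\cdot\delta$ and $(\delta)^2=-2$,
$$\widetilde H(\ka_X^{[2]},\ZZ)=\widetilde H(\ka_X,\ZZ)\oplus^\perp\ZZ\cdot\delta\;\cong\;\widetilde H(S,\ZZ)\oplus^\perp\ZZ\cdot\delta\;\cong\;\widetilde H(S^{[2]},\ZZ).$$
Thus \eqref{eqn:Period} sends $X\in\kc_{\Db}$ to the $\OO(\Gamma)$-orbit of $\widetilde H(S^{[2]},\ZZ)$, and by assumption $\kp(X)$ is Hodge isometric to the same lattice, so $\kp$ and \eqref{eqn:Period} coincide at every point of $\kc_{\Db}$.

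The remaining task is to promote ``equality on the dense set $\kc_{\Db}$'' to ``equality on all of $\kc$''. Both maps are continuous and $\kc$ is connected, so the point is to have a Hausdorff target, and this is the step I expect to require care: the full quotient $D/\OO(\Gamma)$ of the period domain of \emph{all} K3-type Hodge structures on $\Gamma$ (of signature $(4,21)$) is not separated in general, since $\OO(\Gamma)$ does not act properly discontinuously on $D$. What rescues the argument is that every Hodge structure arising from a cubic carries a canonical positive-definite plane of $(1,1)$-classes --- the rank-two lattice $A_2\subset\widetilde H(\ka_X,\ZZ)$ spanned by the Addington--Thomas classes $\lambda_1,\lambda_2$, together with $\ZZ\cdot\delta$. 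Fixing a primitive embedding of $A_2\oplus\ZZ(-2)$ into $\Gamma$ and passing to its orthogonal complement $\Gamma_0$, of signature $(2,20)$, the $(2,0)$-line of any such Hodge structure lies in $\Gamma_{0,\CC}$; hence the period of every cubic lies in the $\OO(\Gamma)$-orbit $\kh\subset D$ of the type IV sub-domain $D_0\subset\PP(\Gamma_{0,\CC})$. One then checks that the translates $gD_0$, $g\in\OO(\Gamma)$, form a locally finite family, so that $\kh$ is closed in $D$, the image $\kh/\OO(\Gamma)$ is closed in $D/\OO(\Gamma)$, and $\kh/\OO(\Gamma)$ is identified with the Hausdorff analytic quotient of $D_0$ by an arithmetic subgroup of $\OO(2,20)$.

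To conclude, note that \eqref{eqn:Period} factors through $\kh/\OO(\Gamma)$ by construction, and since $\kp$ agrees with it on the dense set $\kc_{\Db}$, continuity forces $\kp(\kc)$ into the \emph{closed} subset $\kh/\OO(\Gamma)$ as well; two continuous maps to the Hausdorff space $\kh/\OO(\Gamma)$ that agree on a dense subset are equal, so $\kp=\eqref{eqn:Period}$. Everything except the separatedness discussion --- the density of $\kc_{\Db}$ (quoted above) and the identification of the value of \eqref{eqn:Period} on it (immediate from the definitions recalled above) --- is bookkeeping; the genuine obstacle is precisely that, $D/\OO(\Gamma)$ being non-Hausdorff, one must first cut down to the closed ``Heegner-type'' sublocus $\kh/\OO(\Gamma)$, as the naive ``continuous $+$ dense agreement $\Rightarrow$ equal'' would otherwise fail.
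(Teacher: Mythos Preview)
Your argument is correct and follows the paper's line exactly: the paper supplies no proof beyond the sentence immediately preceding the lemma, ``As a direct consequence of the density of the set of $X\in\kc$ with $\ka_X\cong\Db(S)$, one then easily proves the next result,'' so your reduction to density of the K3-locus $\kc_{\Db}$ together with continuity is precisely what is intended.

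Where you go further than the paper is in flagging that $D/\OO(\Gamma)$ is not Hausdorff---$\Gamma$ has signature $(4,21)$, so $\OO(\Gamma)$ does not act properly discontinuously on $D$---and that the naive density-plus-continuity argument therefore requires the intermediate step of factoring through a Hausdorff sublocus; the paper is silent on this point. Your fix via the sublocus $\kh$ where $A_2\oplus\ZZ\cdot\delta$ is algebraic is the natural one. One caveat: the assertion that the translates $gD_0$ form a locally finite family (and hence that $\kh$ is closed with Hausdorff $\OO(\Gamma)$-quotient) is not automatic, since the fixed lattice $A_2\oplus\ZZ(-2)$ has indefinite signature $(2,1)$ and the integral $(1,1)$-part of a point of $D$ has signature $(2,21)$, which can in principle contain infinitely many such sublattices; this step would benefit from a reference or a short direct argument.
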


More evidence that the above is indeed the correct definition for the Mukai
lattice of $\ka_X^{[2]}$ would come from establishing that equivalences act on it. 
Recall that by \cite[Prop.\ 3.4]{HuyComp} any equivalence 
$\ka_X\cong\ka_{X'}$ that is of Fourier--Mukai type, i.e.\ such that the composition $\Db(X)\to\ka_X\cong\ka_{X'}\,\hookrightarrow \Db(X)$ with the natural inclusion and projection is a Fourier--Mukai functor, induces a Hodge isometry $\widetilde H(\ka_X,\ZZ)\cong\widetilde H(\ka_{X'},\ZZ)$. In fact, it was later proved by Li, Pertusi, and Zhang
that the hypothesis is automatic, see \cite[Thm.\ 1.3]{LPZ}. \smallskip

It seems plausible that similar techniques combined with the results of
Beckmann \cite{Beck} and Taelman \cite{Taelman}
could also prove that any
equivalence $\ka_X^{[2]}\cong\ka_{X'}^{[2]}$ of Fourier--Mukai type 
between the Hilbert squares of the Kuznetsov components of two smooth cubic fourfolds naturally induces a Hodge isometry 
$$\widetilde H(\ka_X^{[2]},\ZZ)\cong\widetilde H(\ka_{X'}^{[2]},\ZZ).$$
Another approach to establish this would be by showing that any Fourier--Mukai equivalence
$\ka_X^{[2]}\cong\ka_{X'}^{[2]}$  deforms to an equivalence
$\ka_Y^{[2]}\cong\ka_{Y'}^{[2]}$ for which $\ka_Y\cong\Db(S)$ and
$\ka_{Y'}\cong\Db(S')$ and then \cite{Beck,Taelman} can be applied directly. 

\section{Fano varieties possessing a Lagrangian fibration}
The aim of this section is to prove Theorem \ref{thm2} which we rephrase here as follows.

\begin{thm}\label{thm:rephrase}
Assume $d/2$ is a perfect square. Then for a Zariski dense
open subset of cubics $X$ in the Hassett divisor $\kc_d$ there exists an exact linear equivalence
$$\Db(F_X)\cong\ka_X^{[2]}.$$
\end{thm}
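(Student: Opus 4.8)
The plan is to exploit the rational Lagrangian fibration on $F_X$ together with the corresponding structure on the twisted K3 side. Since $d/2$ is a perfect square, condition $\rm(\ast\ast'')$ holds, which implies $\rm(\ast\ast')$, so by \cite{HuyComp,BLMNPS} we have $\ka_X\cong\Db(S,\alpha)$ for some twisted K3 surface $(S,\alpha)$. By Proposition \ref{prop:twBKR}, this gives $\ka_X^{[2]}\cong\Db(S^{[2]},\alpha^{[2]})$, so it suffices to produce an exact linear equivalence $\Db(F_X)\cong\Db(S^{[2]},\alpha^{[2]})$ for generic $X\in\kc_d$. The idea is that when $d/2$ is a perfect square, $F_X$ is birational to a moduli space $M$ of $\alpha$-twisted sheaves on $S$, and moreover the rational Lagrangian fibration on $F_X$ matches a genuine Lagrangian fibration on such a moduli space. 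One would first identify, using the Hodge-theoretic dictionary between $\widetilde H(\ka_X,\ZZ)$, $\widetilde H(S,\alpha,\ZZ)$, and $H^2(F_X,\ZZ)$, a Mukai vector $v$ with $v^2=2$ such that $F_X$ is birational to $M:=M_\sigma(S,\alpha,v)$ for a suitable stability condition $\sigma$; the perfect-square hypothesis on $d/2$ is precisely what forces the existence of an isotropic class $w$ with $\langle v,w\rangle=1$ realising the Lagrangian fibration.

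\textbf{Reduction to an actual isomorphism.} The next step is to upgrade the birational equivalence $F_X\sim M$ to a derived equivalence $\Db(F_X)\cong\Db(M,\beta)$ for the appropriate Brauer twist $\beta$ on $M$. For generic $X$ in $\kc_d$ the Picard rank of $F_X$ is minimal (rank two), so the birational map is either an isomorphism or a composition of Mukai flops, and in either case it induces a derived equivalence by \cite{Ka,Na} (in the twisted setting one invokes the twisted analogues, as in the proof of Proposition \ref{prop:twBKR}, or deforms the flop). One then needs $M_\sigma(S,\alpha,v)\cong S'^{[2]}$ for another (twisted) K3, or more directly that $\Db(M,\beta)\cong\Db(S^{[2]},\alpha^{[2]})$: this follows because moduli spaces of twisted sheaves on a twisted K3 with a square-zero "partner" class are themselves derived equivalent to Hilbert squares of twisted K3s, via the twisted Fourier--Mukai transform together with the twisted BKR of Proposition \ref{prop:twBKR}. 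Chaining these equivalences yields
$$\Db(F_X)\cong\Db(M,\beta)\cong\Db(S'^{[2]},\alpha'^{[2]})\cong(\ka_{X})^{[2]},$$
where the last step uses that $(S',\alpha')$ and $(S,\alpha)$ present the same Kuznetsov component up to twisted derived equivalence, which is guaranteed by the Hodge isometry of their Mukai lattices.

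\textbf{The main obstacle.} The hard part will be controlling the Brauer classes throughout: one must verify that the induced twist $\beta$ on the moduli space $M$ and the twist $\alpha^{[2]}$ on $S^{[2]}$ are compatible under all the equivalences, i.e. that the cohomological correspondences track the Brauer data correctly. This requires a careful analysis of the twisted Mukai lattice $\widetilde H(S,\alpha,\ZZ)$, the identification of $H^2(M,\ZZ)$ with the orthogonal complement $v^\perp$, and the behaviour of Brauer classes under the birational modification --- in particular checking that the exceptional loci of the Mukai flops do not obstruct triviality of the relevant difference class, exactly as in the footnote argument for $\kz_2$. A secondary subtlety is ensuring genericity: one must confirm that the Zariski-dense open subset on which $F_X$ has Picard rank two (so that the birational map is a composition of flops) is the same dense open subset on which the Lagrangian fibration exists and on which $\ka_X\cong\Db(S,\alpha)$ with the expected $(S,\alpha)$. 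Once these lattice-and-Brauer bookkeeping issues are settled, the derived equivalences assemble formally.
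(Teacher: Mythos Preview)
Your broad strategy --- pass to a twisted K3 $(S,\alpha)$, use twisted BKR to identify $\ka_X^{[2]}\cong\Db(S^{[2]},\alpha^{[2]})$, realise $F_X$ birationally as a moduli space of twisted sheaves, and apply Kawamata--Namikawa for the flops --- matches the paper's. But there is a genuine gap at the step you gloss over: you assert that ``moduli spaces of twisted sheaves on a twisted K3 with a square-zero partner class are themselves derived equivalent to Hilbert squares of twisted K3s, via the twisted Fourier--Mukai transform together with twisted BKR.'' This is precisely the content of the theorem, not a known input, and it does \emph{not} follow from any general Fourier--Mukai principle. The moduli space $M_\alpha$ and the twisted Hilbert square $(S^{[2]},\alpha^{[2]})$ are not birational (their weight-two Hodge structures differ by the twist), so Mukai flops alone cannot relate them; nor does the existence of an isotropic $w$ with $(v,w)=1$ give you a derived equivalence to a Hilbert scheme in the twisted setting.

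The missing ingredient is a twisted version of Arinkin's autoduality of compactified Jacobians. The paper takes $(S,H)$ specifically of degree \emph{two}, so that $M_\alpha$ is a compactified twisted Picard over $|H|\cong\PP^2$; then the extended Poincar\'e sheaf (Arinkin \cite{Arinkin}, twisted version in \cite{Bottini}) gives $\Db(M_\alpha)\cong\Db(M,\theta)$, where $M=M_S(0,H,-1)$ is the \emph{untwisted} compactified Jacobian and $\theta\in\Br(M)$ is the obstruction class for the universal family. Only after this step does the problem reduce to what you call bookkeeping: one shows, via an explicit Mukai flop $M\sim S^{[2]}$ available because $(H)^2=2$, that $\theta$ corresponds to $\alpha^{[2]}$ (this uses Bhatt's computation in \cite{KLB}), and then twisted Kawamata--Namikawa finishes. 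So the ``main obstacle'' is not tracking Brauer classes through flops --- it is producing the Arinkin-type equivalence that moves the twist from the surface to the fourfold in the first place.
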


It should in principle be possible to extend the result to all $X\in \kc_d$ with $d$ as
above, but one would need more control over stability conditions on $F_X$ and $\ka_X^{[2]}$. 
\smallskip

In addition to the arguments already used in the proof of Theorem \ref{thm1}, two
new ingredients will be needed: A birational correspondence between the Fano variety and a certain moduli spaces of twisted sheaves on curves in  a K3 surface and a twisted version of a result of Arinkin \cite{Arinkin} concerning derived categories of such moduli spaces recently established in \cite{Bottini}.

\subsection{Twisted birational fourfolds}
We will need a twisted version of a result of Kawamata \cite{Ka} and Namikawa \cite{Na}. The result was originally proved by Addington, Donovan, and Meachan \cite[Prop.\ 1.3]{ADM}.\footnote{Thanks to N.\ Addington for the reference.} For the convenience of the reader we include a complete proof, also clarifying the assumptions.\smallskip

The Brauer group of a smooth projective variety is a birational invariant, cf.\ \cite[Cor.\ 6.2.11]{CTS}.
In particular, any birational correspondence $Z\sim Z'$ between two projective hyperk\"ahler manifolds induces naturally an isomorphism
\begin{equation}\label{eqn:Brbir}
\Br(Z)\cong \Br(Z').
\end{equation} between their Brauer groups.
 This isomorphism can also be described via the natural  Hodge isometry $H^2(Z,\ZZ)\cong H^2(Z',\ZZ)$, cf.\ \cite[Lem.\ 2.6]{HuyInv}. To see this, use the standard surjection $H^2(Z,\mu_\ell)\twoheadrightarrow \Br(Z)[\ell]$, which also holds for $Z\,\setminus\, P$, etc., and the standard comparison theorem for \'etale versus singular cohomology with finite coefficients.\smallskip

For Mukai flops one can be more specific, which will come in handy later. Recall that a birational correspondence between two (projective) hyperk\"ahler manifolds $Z\sim Z'$ is a Mukai
flop if it flops a Lagrangian $\PP^n\cong P\subset Z$ to its dual in $P^\ast\subset Z'$ and is an isomorphism $Z\,\setminus\, P\cong Z'\,\setminus\, P^\ast$ otherwise.  Thus, (\ref{eqn:Brbir}) in this case
reads \begin{equation}\label{eqn:BrBr}
\Br(Z)\cong\Br(Z\,\setminus\, P)\cong\Br(Z'\,\setminus\, P^\ast)\cong\Br(Z').
\end{equation}
The result of Kawamata and Namikawa then says that there exists an equivalence $\Db(Z)\cong \Db(Z')$, cf.\ \cite[Ch.\ 11]{HuyFM}.
To prove the twisted version of this statement, we will need the fact that there exist morphisms
\begin{equation*}\xymatrix{Z\ar[r]^-f& \bar Z&\ar[l]_{~f'}Z'}
\end{equation*} 
contracting $P$ and $P^\ast$ to the same singular point $z\in \bar Z$ and
otherwise inducing isomorphisms $Z\,\setminus\, P\cong\bar Z\,\setminus\,\{z\}\cong Z'\,\setminus\,P^\ast$. This allows us to compare Brauer classes on $Z$ and $Z'$ via $\bar Z$.

\begin{lem}\label{lem:BrBr}
For any $\alpha\in\Br(Z)$ corresponding to $\alpha'\in \Br(Z')$ under {\rm(\ref{eqn:BrBr})}
there exists a Brauer class $\bar\alpha\in\Br(\bar Z)$ with $\alpha=f^\ast\bar\alpha$ and $\alpha'=f'^\ast\bar\alpha$.
\end{lem}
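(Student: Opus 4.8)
The plan is to exploit the purity of the Brauer group on the singular contraction $\bar Z$ and the fact that both $Z$ and $Z'$ are crepant resolutions of $\bar Z$. First I would observe that $\bar Z$ is a normal projective variety with a single isolated singular point $z$, and that away from $z$ all three spaces agree: $Z\setminus P\cong \bar Z\setminus\{z\}\cong Z'\setminus P^\ast$. The key input is that for a normal variety with isolated singularities the restriction map $\Br(\bar Z)\to\Br(\bar Z\setminus\{z\})$ is injective, and in fact an isomorphism under suitable hypotheses on the depth/codimension of the singularity. Since $\bar Z$ is obtained by contracting a Lagrangian $\PP^n$ inside a (symplectic, hence Gorenstein canonical) hyperk\"ahler manifold, the singularity $z\in\bar Z$ is rational and of codimension $\geq 3$ (indeed $2n = \dim Z \geq 2n$, and the exceptional locus $P\cong\PP^n$ has codimension $n\geq 2$), which is precisely what one needs to control the Brauer group.

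The main steps would be: (1) identify $\alpha\in\Br(Z)$ with its restriction $\alpha|_{Z\setminus P}\in\Br(Z\setminus P)$, using that $\Br(Z)\to\Br(Z\setminus P)$ is an isomorphism because $P\subset Z$ has codimension $n\geq 2$ and $Z$ is smooth (purity for the Brauer group, as in \cite[Cor.\ 6.2.11]{CTS}); similarly for $\alpha'$ on $Z'$. (2) By hypothesis $\alpha$ and $\alpha'$ correspond under (\ref{eqn:BrBr}), so under the identification $Z\setminus P\cong\bar Z\setminus\{z\}\cong Z'\setminus P^\ast$ they give one and the same class $\beta\in\Br(\bar Z\setminus\{z\})$. (3) Show that the restriction $\Br(\bar Z)\to\Br(\bar Z\setminus\{z\})$ is surjective (it is automatically injective since $\bar Z$ is regular in codimension $\leq 2$ away from a point of codimension $\geq 3$), so that $\beta = \bar\alpha|_{\bar Z\setminus\{z\}}$ for some $\bar\alpha\in\Br(\bar Z)$. (4) Pull back: $f^\ast\bar\alpha$ and $\alpha$ agree on the dense open $Z\setminus P$, hence agree on $Z$ by injectivity of $\Br(Z)\to\Br(Z\setminus P)$; likewise $f'^\ast\bar\alpha=\alpha'$.

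For step (3) I would argue as in the setup already used in the excerpt: via the exact sequence $H^2(\bar Z,\mu_\ell)\twoheadrightarrow\Br(\bar Z)[\ell]$ (valid for $\bar Z$ and for $\bar Z\setminus\{z\}$) together with the comparison between \'etale and singular cohomology with finite coefficients, the problem reduces to a statement about $H^2(\,\cdot\,,\mu_\ell)$ and the local cohomology $H^\ast_z(\bar Z,\mu_\ell)$. Since $z$ is a rational singularity of a symplectic variety, the relevant local cohomology groups $H^i_{\{z\}}(\bar Z,\mu_\ell)$ vanish for $i\leq 3$ (the point has codimension $2n\geq 4$, and rationality controls the remaining cohomology), which forces $H^2(\bar Z,\mu_\ell)\to H^2(\bar Z\setminus\{z\},\mu_\ell)$ to be an isomorphism and in particular surjective after passing to Brauer groups. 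Alternatively, one can work directly on $\bar Z$ using that $f\colon Z\to\bar Z$ is a resolution with $R^if_\ast\ko_Z$ and $R^if_\ast\mu_\ell$ concentrated in low degrees and the Leray spectral sequence.

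\textbf{Main obstacle.} The delicate point is step (3): establishing that a Brauer class on the punctured space $\bar Z\setminus\{z\}$ extends over the singular point $z$. The smooth purity statement (step (1)) is standard, but $\bar Z$ is singular exactly at $z$, so one genuinely needs the geometry of the Mukai-flop contraction — that $z$ is an isolated rational (indeed canonical symplectic) singularity of codimension $\geq 4$ — to kill the obstruction living in local cohomology. I would expect the cleanest route to be to bypass $\bar Z$ slightly: use the resolution $f\colon Z\to\bar Z$, note $R^1f_\ast\GG_m=0$ and $R^2f_\ast\GG_m$ is supported at $z$ with stalk controlled by the exceptional $\PP^n$ (whose relevant cohomology $H^1(\PP^n,\GG_m)$, $\Br(\PP^n)$ vanish), and conclude $\Br(\bar Z)\cong\Br(Z)^{\text{(classes trivial on }P)}$; combined with the symmetric statement for $f'$ and the matching of $\alpha,\alpha'$ on the common open locus, this yields $\bar\alpha$. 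Either way, the heart of the argument is the vanishing/purity computation around the isolated singular point.
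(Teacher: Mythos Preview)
Your overall plan --- restrict to the common open, extend across the singular point $z\in\bar Z$, then pull back using purity on the smooth resolutions --- is correct, and your ``alternative'' route via the Leray spectral sequence for $f\colon Z\to\bar Z$ is exactly what the paper does. However, a key computation is wrong: $R^1f_\ast\GG_m$ is \emph{not} zero, and $H^1(\PP^n,\GG_m)=\Pic(\PP^n)=\ZZ$ does not vanish. What vanishes is $R^2f_\ast\GG_m$ (since $\Br(\PP^n)=0$), while $R^1f_\ast\GG_m=i_{z\ast}\Pic(P)$. Plugging these into the low-degree Leray sequence yields, following Grothendieck \cite[Thm.\ 7.1]{BrauerIII},
\[
0\to\Pic(\bar Z)\to\Pic(Z)\to\Pic(P)\to\Br(\bar Z)\to\Br(Z)\to 0,
\]
so $f^\ast\colon\Br(\bar Z)\to\Br(Z)$ is \emph{surjective} with kernel $\coker\big(\Pic(Z)\to\Pic(P)\big)$, in general nonzero. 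Your picture of $\Br(\bar Z)$ as a subgroup of $\Br(Z)$ consisting of ``classes trivial on $P$'' is therefore inverted, and for the same reason your assertion that $\Br(\bar Z)\to\Br(\bar Z\setminus\{z\})$ is injective fails in general: normality or regularity in codimension two does not give purity through a singular point. The paper's footnote points out that assuming $\Pic(Z)\to\Pic(P)$ surjective --- so that this kernel vanishes --- is precisely the simplifying hypothesis made in \cite{ADM}, which the present argument is designed to avoid.

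The good news is that injectivity is irrelevant to the lemma: you only need \emph{some} lift $\bar\alpha$, not a unique one. Surjectivity of $f^\ast$ produces $\bar\alpha$ with $f^\ast\bar\alpha=\alpha$; then your step~(4), comparing $f'^\ast\bar\alpha$ with $\alpha'$ on $Z'\setminus P^\ast$ and invoking purity on the smooth $Z'$, finishes the proof. The paper packages this last step slightly differently, observing that the kernels of $\Br(\bar Z)\to\Br(Z)$ and of $\Br(\bar Z)\to\Br(Z')$ both equal $\ker\big(\Br(\bar Z)\to\Br(\bar Z\setminus\{z\})\big)$, but your formulation is equivalent. Your local-cohomology route to step~(3) could presumably also be pushed through, but the $\GG_m$-Leray argument is cleaner and sidesteps the analysis of $H^3_{\{z\}}(\bar Z,\mu_\ell)$ that you left unresolved.
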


\begin{proof} We follow Grothendieck's arguments to prove \cite[Thm.\ 7.1]{BrauerIII}.
Applying the Leray spectral sequence to $f$, we obtain the exact sequence
$$0\to\Pic(\bar Z)\to\Pic(Z)\to H^0(\bar Z, R^1f_\ast\GG_m)\to \Br(\bar Z)\to\Br(Z)\to H^1(\bar Z,R^1f_\ast\GG_m),$$
where we use $R^2f_\ast\GG_m=0$ to ensure $E_2^{0,2}=0$. Then observe $R^1f_\ast\GG_m=i_{z\ast}\Pic(P)$, which proves $H^0(\bar Z, R^1f_\ast\GG_m)=\Pic(P)$ and $H^1(\bar Z,R^1f_\ast\GG_m)=0$.
Thus, there exists an exact sequence
$$0\to\Pic(\bar Z)\to \Pic(Z)\to\Pic(P)\to \Br(\bar Z)\to \Br(Z)\to0$$
and similarly for the other projection $f'$.\footnote{In \cite{ADM} the authors make the simplifying assumption that $\Pic(Z)\to\Pic(P)$ is surjective, in which case $\Br(Z)\cong\Br(\bar Z)\cong \Br(Z')$.}
Using the commutativity of the diagram
$$\xymatrix@R=15pt{\Br(\bar Z)\ar[d]\ar[r]&\Br(Z)\ar[d]^-\cong\\
 \Br(\bar Z\,\setminus\,\{z\})\ar[r]^-\cong&\Br(Z\,\setminus\,P),}$$ one finds that
 the two subgroups  
 $\coker(\Pic(Z)\to\Pic(P))$ and $\coker(\Pic(Z')\to\Pic(P^\ast))$ of $\Br(\bar Z)$ are both
simply the kernel of the restriction map $\Br(\bar Z)\to\Br(\bar Z\,\setminus\,\{z\})$. This proves the existence of a (not necessarily unique) class $\bar\alpha\in \Br(\bar Z)$ as claimed.
\end{proof}

 We are now ready to prove the following twist of the result by Kawamata \cite{Ka} and Namikawa \cite{Na}, under additional assumptions already proved in \cite{ADM}.

\begin{prop}[Kawamata, Namikawa, Addington--Donovan--Meachan]\label{prop:KNADM}
Assume $Z\sim Z'$ is a Mukai flop between two projective hyperk\"ahler manifolds.
For any Brauer class $\alpha\in \Br(Z)$ and its associated Brauer
class $\alpha'\in \Br(Z')$ there exists an exact linear equivalence
$$\Db(Z,\alpha)\cong\Db(Z',\alpha').$$
\end{prop}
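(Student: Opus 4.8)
The plan is to upgrade the untwisted Kawamata--Namikawa equivalence $\Db(Z)\cong\Db(Z')$ to the twisted setting by re-running their proof with the twists inserted in a compatible way. Recall that the untwisted argument proceeds by resolving the flop: one forms the fibre product $\widetilde Z\coloneqq Z\times_{\bar Z}Z'$ (equivalently, the common blow-up of $Z$ along $P$ and of $Z'$ along $P^\ast$), with projections $q\colon\widetilde Z\to Z$ and $q'\colon\widetilde Z\to Z'$, and shows that the Fourier--Mukai functor with kernel $\ko_{\widetilde Z}\in\Db(Z\times Z')$ is an equivalence. The key input is that, away from the flopped locus, this is the identity, and over the point $z\in\bar Z$ one checks fully faithfulness by a local computation on the relevant projective bundles, after which the Calabi--Yau property forces the fully faithful functor to be an equivalence.

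\subsection*{Step 1: produce a class on the resolution}
First I would invoke Lemma \ref{lem:BrBr} to obtain $\bar\alpha\in\Br(\bar Z)$ with $f^\ast\bar\alpha=\alpha$ and $f'^\ast\bar\alpha=\alpha'$. Pulling $\bar\alpha$ back along $\widetilde Z\to\bar Z$ yields a Brauer class on $\widetilde Z$ that simultaneously equals $q^\ast\alpha$ and $q'^\ast\alpha'$. In particular the class $(\alpha)^{-1}\times\alpha'\in\Br(Z\times Z')$ restricts trivially to $\widetilde Z\subset Z\times Z'$ (one must check that $\widetilde Z$ embeds in $Z\times Z'$; this is standard for a Mukai flop, the diagonal $\bar Z\to\bar Z\times\bar Z$ being used as in the untwisted case). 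Hence $\ko_{\widetilde Z}$, or rather its twist by a chosen trivialisation, is a legitimate $((\alpha)^{-1}\times\alpha')$-twisted sheaf on $Z\times Z'$ and defines a Fourier--Mukai functor $\Phi\colon\Db(Z,\alpha)\to\Db(Z',\alpha')$.

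\subsection*{Step 2: fully faithfulness and equivalence}
Then I would repeat the Kawamata--Namikawa verification that $\Phi$ is fully faithful. As in the proof of Proposition \ref{prop:twBKR}, one computes $\Phi_L\circ\Phi$: its kernel $\kq\in\Db(Z\times Z,(\alpha)^{-1}\times\alpha)$ carries an adjunction morphism $\varphi\colon\kq\to\ko_\Delta$, and one checks $\varphi$ is an isomorphism \emph{locally over} $\bar Z$. But locally over $\bar Z$ the Brauer classes $\alpha,\alpha'$ become trivial (they are pulled back from $\bar Z$), so this local check is literally the one in the untwisted Kawamata--Namikawa argument, i.e.\ the computation on the pair of dual projective bundles over the flopped $\PP^n$. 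Hence $\varphi$ is an isomorphism, so $\Phi$ is fully faithful by \cite[Cor.\ 1.23]{HuyFM}. Finally, since $Z'$ is a (twisted) Calabi--Yau variety of the same dimension as $Z$, a fully faithful Fourier--Mukai functor between $\Db(Z,\alpha)$ and $\Db(Z',\alpha')$ is automatically an equivalence (the standard argument: the image is admissible, and the Serre functors being shifts forces the orthogonal complement to vanish). This completes the proof.

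\subsection*{Main obstacle}
The part requiring the most care is not the geometry of the flop --- that is imported verbatim from Kawamata and Namikawa --- but the bookkeeping of twists: one must fix an actual \v Cech/gerbe trivialisation of $\bar\alpha$-pullbacks over a suitable cover, upgrade $\ko_{\widetilde Z}$ to an honest twisted sheaf, and check that the adjunction $\varphi\colon\kq\to\ko_\Delta$ is defined in the twisted derived category $\Db(Z\times Z,(\alpha)^{-1}\times\alpha)$ and restricts correctly to the local model, where all twists are canonically trivial. The non-uniqueness of $\bar\alpha$ in Lemma \ref{lem:BrBr} is harmless here, since any choice satisfying $f^\ast\bar\alpha=\alpha$, $f'^\ast\bar\alpha=\alpha'$ works. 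One should also note that, unlike in \cite{ADM}, no surjectivity hypothesis on $\Pic(Z)\to\Pic(P)$ is needed: Lemma \ref{lem:BrBr} already supplies the required $\bar\alpha$ in general.
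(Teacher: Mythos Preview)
Your approach is essentially the paper's: invoke Lemma~\ref{lem:BrBr} to produce $\bar\alpha\in\Br(\bar Z)$ so that the Kawamata--Namikawa kernel becomes naturally $(\alpha^{-1}\times\alpha')$-twisted, and then re-run the untwisted argument. The paper is terser---it simply asserts the convolution identities $\kp\ast\kp_L\cong\ko_{\Delta_{Z'}}$ and $\kp_R\ast\kp\cong\ko_{\Delta_Z}$ continue to hold---whereas you make this explicit via the adjunction morphism checked locally over $\bar Z$ (exactly as in the proof of Proposition~\ref{prop:twBKR}) and then appeal to the Calabi--Yau property; both routes are fine.

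One correction is needed. For a Mukai flop with $P\cong\PP^n$, $n\ge2$, the fibre product $Z\times_{\bar Z}Z'$ is \emph{not} the common blow-up $\text{Bl}_P(Z)\cong\text{Bl}_{P^\ast}(Z')$: over the singular point the fibre product has fibre all of $P\times P^\ast$, while the blow-up only sees the incidence hypersurface $\PP(\Omega_{\PP^n})\subset P\times P^\ast$. Namikawa showed that the blow-up kernel alone does \emph{not} give an equivalence; the correct kernel, used by Kawamata and in the paper, is $\ko_\Gamma$ with $\Gamma=\tilde Z\cup(P\times P^\ast)$, which coincides with your fibre product. So your definition $\widetilde Z\coloneqq Z\times_{\bar Z}Z'$ is right and the argument goes through, but the parenthetical ``equivalently, the common blow-up'' is false and should be removed.
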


\begin{proof}
According to \cite{Ka,Na}, 
the Fourier--Mukai transform $\Phi_\kp\colon \Db(Z)\to\Db(Z')$ with
kernel the structure sheaf $\kp\coloneqq\ko_\Gamma$ of the subscheme $\Gamma\coloneqq \tilde Z\cup (P\times P^\ast)$ defines an equivalence, see also \cite[Ch.\ 11]{HuyFM}. Here, $\text{Bl}_P(Z)\cong\tilde Z\cong\text{Bl}_{P^*}(Z')$ is by construction of the Mukai flop a subscheme of $Z\times Z'$. The assertion that $\Phi_\kp$ is an equivalence is then the statement that the convolutions with $\kp_R=\kp_L\coloneqq \kp^\vee[2n]$
satisfy, cf.\ \cite[Ch.\ 5]{HuyFM}: 
\begin{equation}\label{eqn:convol}
\kp\ast\kp_L\cong\ko_{\Delta_{Z'}}\text{ and }\kp_R\ast\kp\cong\ko_{\Delta_{Z}}.
\end{equation}

We now apply Lemma \ref{lem:BrBr} and present the given Brauer class $\alpha\in \Br(Z)$ and the corresponding $\alpha'\in \Br(Z')$ as the pull-backs of a class $\bar\alpha\in\Br(\bar Z)$. Then
$\ko_\Gamma$ is a sheaf that can be considered as naturally twisted with respect to $\alpha^{-1}\times\alpha'\in \Br(Z\times Z')$. 
This allows one to use the same kernel $\kp=\ko_\Gamma$ to define the Fourier--Mukai transform
$\Phi_\kp\colon \Db(Z,\alpha)\to\Db(Z',\alpha')$. As (\ref{eqn:convol}) continues to hold, this defines again an equivalence.
\end{proof}

We will encounter two flops in the course of the proof of Theorem \ref{thm2}, both involving a polarised
K3 surface $(S,H)$ of degree two and Picard number one (or, slightly weaker, with all curves in $|H|$ integral):

\begin{enumerate}
\item Between the moduli space $M=M_S(0,H,1)$ of stable sheaves of rank
one and degree two on the fibres of the universal family $\kc\to |H|$ and the Hilbert scheme
$S^{[2]}$, see \S\! \ref{sec:obstruction}.
\item Between  the moduli space
$M_\alpha=M_{S,\alpha}(0,H,\chi)$ of $\alpha$-twisted sheaves of rank one on the
fibres of $\kc\to|H|$ and the Fano variety $F_X$, see \S\! \ref{sec:tw}.
\end{enumerate}
\subsection{Derived equivalence of twisted Jacobians}\label{sec:DbJac}
Assume $(S,H)$ is a polarised K3 surface and assume furthermore that all curves
$ C\in |H|$ are integral. Note that this is a Zariski open condition in the moduli
space of all $(S,H)$.
The relative Jacobian $$\Pic^0\coloneqq\Pic^0(\kc/|H|_{\text{sm}})\to |H|_{\text{sm}}$$
of the smooth part of the universal family $\pi\colon\kc\to |H|$ is naturally compactified
by the moduli space $M\coloneqq M_S(0,H,1-g)$ of $H$-stable sheaves of Mukai vector
$(0,H,1-g)$. Here, $g=(1/2)(H)^2+1$ is the genus of any curve $C\in |H|_{\text{sm}}$ and the generic sheaf parametrised by $M$ is indeed a line bundle of degree zero on such a curve:
\begin{equation}\label{eqn:PicM}
\xymatrix{\Pic^0\ar[d]\ar@{^(->}[r]&M\ar[d]\\
|H|_{\text{sm}}\ar@{^(->}[r]&|H|.}
\end{equation}
For generic $(S,H)$ the moduli space $M$ is a projective hyperk\"ahler manifold and the projection
given by mapping a sheaf to its support defines the Lagrangian fibration over $|H|$.\smallskip

Next we consider the twisted version. For this, we fix a Brauer class $\alpha\in\Br(S)$ and let
$M_{S,\alpha}(0,H)$ be the moduli space of $H$-semistable $\alpha$-twisted 
sheaves that are direct images of $\alpha$-twisted sheaves on curves $C\in |H|$.
We fix one connected component $M_\alpha\subset M_{S,\alpha}(0,H)$
and assume that $M_\alpha$ is a fine moduli space, i.e.\ that there exists a universal $(\alpha\times1)$-twisted sheaf $\kq$ on $S\times M_\alpha$.\smallskip

 Again, for generic $(S,H)$, the moduli space $M_\alpha$ is a projective hyperk\"ahler manifold of $\text{K3}^{[n]}$-type, see \cite{Yos}.
 Analogously to (\ref{eqn:PicM}), one has
\begin{equation}\label{eqn:PicM}
\xymatrix{\Pic_\alpha^\chi\ar[d]\ar@{^(->}[r]&M_\alpha\ar[d]\\
|H|_{\text{sm}}\ar@{^(->}[r]&|H|.}
\end{equation}
Here, $\Pic_\alpha^\chi\to |H|_{\text{sm}}$ is a torsor for the abelian
scheme $\Pic^0\to |H|_{\text{sm}}$.\footnote{To simplify the discussion, we do not discuss how to actually fix the numerical invariant $\chi$ here, which would involve choosing a $B$-field for $\alpha$. For us, $\Pic^\chi_\alpha$ is simply one of the components of the whole
$\Pic_\alpha$, as $M_\alpha$ is a component of $M_{S,\alpha}(0,H)$.}
Using the terminology of the Tate--{\v{S}}afarevi{\v{c}} group for the polarised K3 surface $(S,H)$
introduced in \cite{HuMa}, the twisted relative Picard $\Pic_\alpha^\chi$
is an element in $\Sha(S,H)$.

\begin{remark}\label{rem:BrCA}
Alternatively, one can consider the image of $\alpha$
under the natural map $$\Br(S)\to\Br(\kc)\to H^1(|H|_{\text{sm}},R^1\pi_\ast\Gm),$$ where $\pi\colon \kc\to|H|_{\text{sm}}$ is the projection, which then defines a twist $\Pic_\alpha$ of the relative Picard scheme $\Pic(\kc/|H|_{\text{sm}})\to |H|_{\text{sm}}$. This is nothing but the moduli space $\Pic_\alpha$ of which $\Pic_\alpha^\chi$ is one component, cf.\ \cite[Prop.\ 3.5]{HuMa}.\smallskip

We can also view this construction purely from the perspective of $\Pic^0$ as follows, cf.\ \cite[\S\! 3.1]{HuyPI}. Denote by $\kc^{[2]}$ the relative Hilbert scheme of $\kc\to|H|_{\text{sm}}$. Then there exists a natural map
$\Br(\kc)\to\Br(\kc^{[2]})$, $\alpha\mapsto \alpha^{[2]}$ and,
by means of the birational Abel--Jacobi map $\kc^{[2]}\to\Pic^{2}$ and
the isomorphism $\Pic^2\cong\Pic^0$, a natural isomorphism
$\Br(\kc^{[2]})\cong\Br(\Pic^0)$. Composing both gives
\begin{equation}\label{eqn:BrcurvPic0}
\Br(S)\to\Br(\kc)\to\Br(\kc^{[2]})\to\Br(\Pic^0).
\end{equation}

The class $\alpha^{[2]}$ is trivial on the fibres and thus its image defines a class
in $H^1(|H|_{\text{sm}},R^1f_\ast\Gm)$, where $f$ is the projection $\Pic^0\to |H|_{\text{sm}}$. Thus, $\alpha^{[2]}$ defines a twist of $\Pic(\Pic^0)=\Pic$ which is again just $\Pic_\alpha$.
\end{remark}


\begin{remark}
Note, that $\Pic^0=\Pic^0(\kc/|H|_{\text{sm}})$ and its torsor $\Pic_\alpha^\chi=\Pic^\chi_\alpha(\kc/|H|_{\text {sm}})$ are both relative moduli spaces of sheaves on the fibres of $\kc\to|H|_{\text{sm}}$, but the
first is also a moduli space of sheaves on the second. More precisely, since $\Pic^0\to|H|_{\text{sm}}$
is a principally polarised abelian scheme, the relative Jacobian of $\Pic^\chi_\alpha\to|H|_{\text{sm}}$ is $\Pic^0\to|H|_{\text{sm}}$.
\end{remark}

However, as $\Pic_\alpha^\chi\to |H|_{\text{sm}}$
does not admit a section, unless $\alpha$ induces a trivial class in $H^1(|H|_{\text{sm}},R^1\pi_\ast\Gm)\cong H^1(|H|_{\text{sm}},R^1f_\ast\Gm) $, there is usually no universal family $$\kp\to \Pic^0\times_{|H|}\Pic^\chi_\alpha.$$
Nonetheless, $\kp$ always exists as a $(\theta\times1)$-twisted line bundle, where $\theta\in\Br(\Pic^0)$ is the obstruction class to the existence of an untwisted universal bundle. The usual Fourier--Mukai
formalism then induces an equivalence
\begin{equation}\label{eqn:DbPic}
\Db(\Pic^0,\theta)\cong \Db(\Pic_\alpha^\chi).
\end{equation}

According to Bhatt \cite[App.\ A.1]{KLB} and using Remark \ref{rem:BrCA}, the class $\theta$
is, up to a sign, nothing but the image of $\alpha$ under (\ref{eqn:BrcurvPic0}).\smallskip

The next result is a compactified version of (\ref{eqn:DbPic}).

\begin{prop}\label{prop:obst}
The obstruction class $\theta\in\Br(\Pic^0)$ extends to a Brauer class on the moduli
space $M$, again called $\theta\in \Br(M)$, and {\rm (\ref{eqn:DbPic})} extends to an equivalence
\begin{equation}\label{eqn:DbPic2}
\Db(M,\theta)\cong \Db(M_\alpha).
\end{equation}
\end{prop}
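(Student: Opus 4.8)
The plan is to upgrade the Fourier--Mukai equivalence \eqref{eqn:DbPic} over the smooth locus $|H|_{\text{sm}}$ to an equivalence over the full base $|H|$, using that both $M$ and $M_\alpha$ are smooth projective (hence, being hyperk\"ahler of dimension $2n$, in particular Calabi--Yau) and that the two sides already agree over the complement of a high-codimension locus. First I would recall Arinkin's theorem and its twisted counterpart from \cite{Bottini}: for a Lagrangian fibration $M\to|H|$ with integral fibres arising as a compactified Jacobian, the dual fibration --- which here is precisely $M_\alpha\to|H|$, since $\Pic^\chi_\alpha$ is a torsor under the autodual abelian scheme $\Pic^0$ --- carries a (twisted) Poincar\'e sheaf $\kp$ on $M\times_{|H|}M_\alpha$ whose associated Fourier--Mukai functor is an equivalence $\Db(M,\theta)\cong\Db(M_\alpha)$, where $\theta\in\Br(M)$ is the obstruction to un-twisting $\kp$. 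So the first real step is to identify $\theta$: over $|H|_{\text{sm}}$ it is the class from \eqref{eqn:DbPic}, i.e.\ the image of $\alpha$ under \eqref{eqn:BrcurvPic0} by Bhatt's computation, and I must check that this class extends to $\Br(M)$. This extension is a codimension argument: $M\setminus\Pic^0$ has codimension $\geq 2$ in $M$ (the non-integral or singular fibres sit over a subvariety of $|H|$ of codimension $\geq 2$, the relevant statement for degree-two K3 surfaces with all curves in $|H|$ integral), and purity of the Brauer group for smooth varieties (e.g.\ \cite[Cor.\ 6.2.11]{CTS} type results, or \cite{BrauerIII}) then gives $\Br(M)\isomor\Br(\Pic^0)$, so $\theta$ extends uniquely.

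With $\theta\in\Br(M)$ in hand, the second step is to promote \eqref{eqn:DbPic} to \eqref{eqn:DbPic2}. Here I would directly invoke the twisted Arinkin-type statement of \cite{Bottini}, which is presumably stated for the compactified relative Jacobian $M=M_S(0,H,1-g)$ and its twisted dual $M_\alpha$: the Poincar\'e kernel $\kp$, a $(\theta^{-1}\times1)$-twisted sheaf on $M\times_{|H|}M_\alpha$ (flat of the expected dimension, restricting to the classical Poincar\'e bundle over $|H|_{\text{sm}}$), induces the Fourier--Mukai functor $\Phi_\kp\colon\Db(M,\theta)\to\Db(M_\alpha)$, and \cite{Bottini} establishes that it is an equivalence. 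If one instead wants a soft argument bootstrapping from \eqref{eqn:DbPic}: the convolutions $\kp\ast\kp_L$ and $\kp_R\ast\kp$ are kernels on $M_\alpha\times M_\alpha$ resp.\ $M\times M$ which restrict to the (twisted) diagonal over the smooth locus of the base; the adjunction morphisms $\kp\ast\kp_L\to\ko_{\Delta_{M_\alpha}}$ and $\kp_R\ast\kp\to\ko_{\Delta_M}$ are therefore isomorphisms away from a codimension-$\geq2$ locus, and an argument in the spirit of \cite[Cor.\ 1.23]{HuyFM} --- fully faithfulness detected generically plus the Calabi--Yau/connectedness property forcing an equivalence, exactly as in the proof of Proposition \ref{prop:twBKR} --- upgrades this to genuine isomorphisms of kernels. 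Either way the output is \eqref{eqn:DbPic2}.

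The main obstacle I anticipate is the existence and good behaviour of the twisted Poincar\'e kernel $\kp$ across the \emph{singular} (but integral) fibres of the Lagrangian fibration --- i.e.\ making precise that the classical relative Fourier--Mukai duality between $\Pic^0$ and its torsor $\Pic^\chi_\alpha$ extends over all of $|H|$ to an equivalence of the compactifications. This is exactly the content of Arinkin's autoduality theorem for compactified Jacobians of integral curves, and its twisted version is the genuinely new input cited to \cite{Bottini}; the role of the hypotheses (degree two, Picard number one, or at least all curves in $|H|$ integral) is precisely to guarantee that the relative compactified Jacobian is smooth and that the bad locus on the base has codimension $\geq2$, which is what makes the purity/codimension arguments for $\theta$ and for the convolution kernels go through. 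I would therefore organise the proof as: (i) codimension and purity to define $\theta\in\Br(M)$ extending the obstruction class over $|H|_{\text{sm}}$; (ii) cite the twisted Arinkin equivalence of \cite{Bottini} for $\Db(M,\theta)\cong\Db(M_\alpha)$, or alternatively deduce it from \eqref{eqn:DbPic} by the generic-fully-faithful-plus-Calabi--Yau argument; (iii) record that this is the desired \eqref{eqn:DbPic2}.
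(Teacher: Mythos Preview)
Your overall strategy---invoking the twisted Arinkin-type equivalence from \cite{Bottini}---is exactly what the paper does: its proof is essentially a citation to \cite[Thm.\ 3.3 \& Rem.\ 3.4]{Bottini}, with historical context pointing to Arinkin \cite{Arinkin} for the untwisted autoduality of compactified Jacobians and to \cite{ADM} for relative Picard schemes of arbitrary degree.

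However, your step (i) rests on a false codimension claim. For a polarised K3 surface $(S,H)$ of degree two one has $|H|\cong\PP^2$, and the discriminant locus parametrising singular curves $C\in|H|$ is a \emph{curve} in $\PP^2$, hence of codimension one, not $\geq 2$. Consequently $M\setminus\Pic^0$ has codimension one in $M$, and purity of the Brauer group does \emph{not} yield $\Br(M)\cong\Br(\Pic^0)$; the restriction map is merely injective. The extension of $\theta$ to all of $M$ is therefore not a formal consequence of purity but part of the genuine content being cited: in \cite{Bottini} the twisted Poincar\'e sheaf is constructed on all of $M\times_{|H|}M_\alpha$, including over the singular fibres, and its twisting class on the $M$-factor is by construction the desired extension of $\theta$.

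For the same reason your ``soft'' alternative in step (ii) does not go through as stated. The adjunction morphisms $\kp\ast\kp_L\to\ko_{\Delta}$ are known to be isomorphisms only over $|H|_{\text{sm}}$, whose complement has codimension one, so no codimension argument upgrades this to an isomorphism over all of $|H|$. This is precisely why Arinkin's theorem is nontrivial: extending Fourier--Mukai duality across singular (integral, planar) fibres requires real work on the kernel, and the twisted version in \cite{Bottini} is the input here, not a bootstrap from \eqref{eqn:DbPic}. Your instinct that this is ``the main obstacle'' is correct; the point is that it cannot be circumvented.
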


\begin{proof} The result goes back to Arinkin \cite[Thm.\ C \& \S\! 7]{Arinkin} who proved that for an integral curve with planar singularities the Poincar\'e bundle extends to a sheaf on the square of the compactification of the Jacobian which taken as a Fourier--Mukai kernel induces an auto-equivalence.  For relative compactifications of Picard schemes of any degree the result was proved by Addington, Donovan, and Meachan \cite{ADM}. The result claimed
here is \cite[Thm.\ 3.3 \& Rem.\ 3.4]{Bottini}.
\end{proof}
\subsection{Computing the obstruction}\label{sec:obstruction}
From now on we assume that $(S,H)$ is a generic polarised K3 surface of degree two.
In particular, we may assume that all $C\in |H|$ are integral and that  the linear system $|H|$ defines a finite morphism $\varphi\colon S\to\PP^2$ of degree two branched over a curve of degree six. The curves $C\in |H|$
are the pre-images of lines in $\PP^2$. In this situation, we wish to relate the moduli 
space $M=M(0,H,-1)$ and the Hilbert scheme
$S^{[2]}$.\smallskip

 If $\xi\in S^{[2]}$ is not a fibre of $\varphi\colon S\to\PP^2$, then there exists exactly one line $\ell_\xi\subset \PP^2$ containing its image and hence exactly one curve $C_\xi\in|H|$, the pre-image of $\ell_\xi$, containing $\xi$. Tensoring its ideal sheaf ${\mathcal I}_\xi\subset\ko_{C_\xi}$ with $H$ defines a point ${\mathcal I}_\xi\otimes H\in M$. The map
$S^{[2]}\to M$, $\xi\mapsto {\mathcal I}_\xi\otimes H$, is regular and an open immersion on the complement of the plane $\PP^2\subset S^{[2]}$, $t\mapsto \varphi^{-1}(t)$. It is resolved by a Mukai flop
which is geometrically described by
\begin{equation}
\xymatrix{M&\ar[l]\ar[r]\kc^{[2]}&S^{[2]},} \xymatrix@C=17pt{{\mathcal I}_\xi\otimes H&\ar@{|->}[l]\ar@{|->}[r](\xi\subset C)&(\xi\subset S).}
\end{equation}
Here, as before, $\kc^{[2]}$ is the relative Hilbert scheme of zero-cycles of length two in the fibres $C$
of the family $\kc\to|H|$. For example, if $\xi\in S^{[2]}$ is a fibre of $\varphi$ over $t\in \PP^2$, then $\xi\subset C$ for all $C\in |H|$ with $t\in\varphi(C)$ and those
form a line in $|H|$.\smallskip

This birational correspondence between $M$ and $S^{[2]}$, which induces
an isomorphism between their Brauer groups,  cf.\ (\ref{eqn:BrBr}),
 allows one to identify the obstruction class $\theta\in \Br(M)$ in Proposition \ref{prop:obst}.

\begin{prop}\label{prop:IDBr}
Under the natural isomorphism $\Br(M)\cong\Br(S^{[2]})$ induced by the
Mukai flop above, the obstruction class $\theta$ is, up to a sign, identified with $\alpha^{[2]}$.
\end{prop}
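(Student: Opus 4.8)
The plan is to trace both descriptions of the obstruction class through the birational correspondence and identify them by comparing Brauer classes on the common singular model $\bar Z$, exactly as in the proof of Lemma~\ref{lem:BrBr}. Recall that by Remark~\ref{rem:BrCA} and the result of Bhatt quoted after \eqref{eqn:DbPic}, the obstruction class $\theta\in\Br(\Pic^0)$ is (up to sign) the image of $\alpha$ under the composite \eqref{eqn:BrcurvPic0}, namely $\Br(S)\to\Br(\kc)\to\Br(\kc^{[2]})\to\Br(\Pic^0)$, where the last isomorphism comes from the birational Abel--Jacobi map $\kc^{[2]}\dashrightarrow\Pic^2\cong\Pic^0$. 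On the other hand, $\alpha^{[2]}\in\Br(S^{[2]})$ is by definition the image of $\alpha$ under $\Br(S)\to\Br(\kc)\to\Br(\kc^{[2]})$ followed by transport along the natural isomorphism $\Br(\kc^{[2]})\cong\Br(S^{[2]})$ (restriction from the open locus where $\kc^{[2]}\to S^{[2]}$ is an isomorphism, extended uniquely). So both $\theta$ and $\alpha^{[2]}$ originate from the \emph{same} class $\alpha^{[2]}_\kc\in\Br(\kc^{[2]})$; what must be checked is that the two ways of moving $\alpha^{[2]}_\kc$ out of $\Br(\kc^{[2]})$ — once via $\kc^{[2]}\dashrightarrow\Pic^0\hookrightarrow M$ and once via $\kc^{[2]}\to S^{[2]}$ — land on classes that agree under $\Br(M)\cong\Br(S^{[2]})$.

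First I would set up the comparison over the generic (smooth-fibre) locus $|H|_{\mathrm{sm}}$: there the three spaces $M\supset\Pic^0$, $\kc^{[2]}$, and $S^{[2]}$ all restrict to smooth families, $\kc^{[2]}_{\mathrm{sm}}\to S^{[2]}_{\mathrm{sm}}$ is an isomorphism away from the diagonal and the Abel--Jacobi map $\kc^{[2]}_{\mathrm{sm}}\to\Pic^2_{\mathrm{sm}}$ is a $\PP^1$-bundle over the complement of a section, so all the relevant Brauer groups are canonically identified with $H^1(|H|_{\mathrm{sm}},R^1\pi_*\Gm)$-classes (the fibrewise-trivial part), and on that locus the identity $\theta|_{\Pic^0}=\pm\,\alpha^{[2]}|_{S^{[2]}_{\mathrm{sm}}}$ is exactly the content of Remark~\ref{rem:BrCA} together with Bhatt's computation — essentially tautological. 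The real work is to show this identification over $|H|_{\mathrm{sm}}$ propagates across the Mukai flop $M\leftarrow\kc^{[2]}\to S^{[2]}$ to an identification on all of $M$ and $S^{[2]}$.

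For that, I would invoke the setup of Lemma~\ref{lem:BrBr} directly: the flop fits into $M\xrightarrow{f}\bar Z\xleftarrow{f'}S^{[2]}$ contracting the dual projective spaces $P$ and $P^\ast$ (here $P=\PP^2\subset S^{[2]}$ and its dual inside $M$) to a common point, and by that lemma the isomorphism $\Br(M)\cong\Br(S^{[2]})$ is realised by pulling back a common class $\bar\alpha\in\Br(\bar Z)$. So it suffices to produce a class $\bar\theta\in\Br(\bar Z)$ restricting to $\theta$ on $M$ and to prove its pullback to $S^{[2]}$ is $\pm\alpha^{[2]}$; since both $\theta$ and $\alpha^{[2]}$ restrict on the open common locus $\bar Z\setminus\{z\}\cong M\setminus P\cong S^{[2]}\setminus P^\ast$ to the same class (by the previous paragraph), and since by the exact sequence $0\to\Br(\bar Z)\to\Br(\bar Z\setminus\{z\})$ is \emph{not} injective in general, I will need to control the ambiguity: the kernel of $\Br(\bar Z)\to\Br(\bar Z\setminus\{z\})$ is, as shown in the proof of Lemma~\ref{lem:BrBr}, the common subgroup $\coker(\Pic(M)\to\Pic(P))=\coker(\Pic(S^{[2]})\to\Pic(P^\ast))$, and the statement is only claimed ``up to a sign'' — so I expect the cleanest route is to note that $\theta$ and $\alpha^{[2]}$ both extend across the flop (the former by Proposition~\ref{prop:obst}, the latter because $\alpha^{[2]}$ is defined on all of $S^{[2]}$), hence both are pullbacks of classes on $\bar Z$, and two classes on $\bar Z$ agreeing away from the point $z$ differ by an element of that finite kernel. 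Finally I would argue this residual discrepancy vanishes by a direct local computation near $z$: over the pencil of curves through the point $t\in\PP^2$ the twisting data is governed by the monodromy of the relative $\mathrm{Pic}^0$, and the extension of $\theta$ constructed in \cite{Bottini,Arinkin} is precisely the one with trivial residue along the exceptional locus, matching $\alpha^{[2]}$ which has trivial residue by construction on $S^{[2]}$.

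\textbf{Main obstacle.} The hard part will be the very last step — showing the two \emph{extensions} across the flop coincide (not merely up to the a priori kernel of $\Br(\bar Z)\to\Br(\bar Z\setminus\{z\})$), i.e.\ pinning down the residue of Arinkin's/Bottini's obstruction class $\theta$ along the flopped $\PP^2$ and checking it is the same as that of $\alpha^{[2]}$. This is a local question around the singular point $z\in\bar Z$ where neither $M$ nor $S^{[2]}$ is the natural model, and it requires understanding the universal twisted sheaf $\kq$ on $S\times M_\alpha$ (equivalently, the Poincar\'e-type kernel from Proposition~\ref{prop:obst}) in a neighbourhood of the degenerate fibres of $\kc\to|H|$ over the branch point. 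I anticipate the cleanest fix is to avoid the residue computation altogether by observing that $\Br(\bar Z)$ is $2$-divisible-free enough — or that $\coker(\Pic(S^{[2]})\to\Pic(\PP^2))=0$ under the genericity hypothesis on $(S,H)$ — so that $\Br(\bar Z)\hookrightarrow\Br(\bar Z\setminus\{z\})$ after all, making the ``agreement away from $z$'' established in the second paragraph sufficient; this potential simplification should be checked first, as it would render the whole argument essentially formal.
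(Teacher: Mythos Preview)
Your core observation is correct and matches the paper exactly: both $\theta$ and $\alpha^{[2]}$ originate from the same class on $\kc^{[2]}$, and over the preimage of $|H|_{\text{sm}}$ this is precisely the content of Remark~\ref{rem:BrCA} combined with Bhatt's result. That is the entire proof.

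Where you go astray is in believing there is any ``real work'' beyond this. The isomorphism $\Br(M)\cong\Br(S^{[2]})$ in \eqref{eqn:BrBr} is defined by restriction to the common open $M\,\setminus\,P\cong S^{[2]}\,\setminus\,P^\ast$; it does \emph{not} go through $\bar Z$. Since $M$ and $S^{[2]}$ are \emph{smooth}, the restriction maps $\Br(M)\to\Br(M\,\setminus\,P)$ and $\Br(S^{[2]})\to\Br(S^{[2]}\,\setminus\,P^\ast)$ are injective (indeed, restriction to any non-empty open is injective for smooth schemes). Both $\theta$ and $\alpha^{[2]}$ are already defined globally on $M$ and $S^{[2]}$ respectively, so the moment you know their restrictions to the common open agree---which you established in your second paragraph---you are finished. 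There is nothing to propagate, no residue to compute, no extension to pin down.

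Your detour through the singular model $\bar Z$ and Lemma~\ref{lem:BrBr} imports exactly the difficulty you then struggle with: the failure of injectivity of $\Br(\bar Z)\to\Br(\bar Z\,\setminus\,\{z\})$ is an artefact of the singularity of $\bar Z$ and is irrelevant here. Lemma~\ref{lem:BrBr} was needed earlier for a different purpose (producing a class on $\bar Z$ to twist the Fourier--Mukai kernel), not to define or compute the isomorphism of Brauer groups. Your final instinct---that some injectivity statement should make everything formal---is right, but you should apply it to the smooth varieties $M$ and $S^{[2]}$, where it holds automatically, rather than to $\bar Z$, where it need not.
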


\begin{proof} Since the restriction to any non-empty open subset is injective,
the assertion can be verified on the pre-images of the open set $|H|_{\text{sm}}$. \smallskip

According to  Remark \ref{rem:BrCA} and Bhatt's result \cite[App.\ A.1]{KLB}, 
the class $\theta$, up to a sign, pulls back under $\kc^{[2]}\to\Pic^0\subset M$
 to the pull back of $\alpha^{[2]}$ under the projection $\kc^{[2]}\to S^{[2]}$.
\EV{
Since the two restriction maps $$\Br(M)\to \Br(M_\eta=\Pic^0(\kc_\eta))~\text{ and }~\Br(S^{[2]})\to \Br(\kc^{[2]})\to \Br(\kc_\eta^{[2]})$$ to the fibres over the generic point $\eta\in |H|$ are both injective, it suffices to prove that pulling-back $\theta_\eta\in \Br(\Pic^0(\kc_\eta))$ under $\kc_\eta^{[2]}\to\Pic^0(\kc_\eta)$ yields $\alpha_\eta^{[2]}$
or, equivalently,\TBC{Check again: Is $\Br(C^{(2)})\to\Br(C^2)$ injective?} that the
pull-back of $\theta_\eta$ under
$$\kc_\eta\times\kc_\eta\to\kc_\eta^{[2]}\to\Pic^0(\kc_\eta)\cong\Pic^0(\Pic^\chi_\alpha(\kc_\eta))$$
is $\alpha_\eta^{\times 2}$.\smallskip
To control the class $\alpha_\eta^{\times 2}$, we consider the universal $(\alpha_\eta\times 1)$-twisted sheaf $\kq_\eta$ on $\kc_\eta\times\Pic_\alpha^\chi(\kc_\eta)$
and its square
$$\kq_\eta^{\times 2}\coloneqq p_{12}^\ast\kq_\eta\otimes p_{23}^\ast\kq_\eta~\text{ on }~\kc_\eta\times
\kc_\eta\times\Pic_\alpha^\chi(\kc_\eta),$$ which is an $(\alpha^{\times 2}\times 1)$-twisted sheaf. We compare it to the pull-back $$(\pi\times\text{id})^\ast\kp_\eta~\text{ on }~\kc_\eta\times
\kc_\eta\times\Pic_\alpha^\chi(\kc_\eta),$$ of the $(\theta_\eta\times1)$-twisted Poincar\'e bundle $\kp_\eta$ on $\Pic^0(\kc_\eta)\times\Pic_\alpha^\chi(\kc_\eta)$.
Here, $ \pi$ is the Albanese map $$\kc_\eta^2\to \Pic^0(\kc_\eta),~
(x,y)\mapsto \ko(x+y)\otimes\omega^\ast_{\kc_\eta}.$$

We claim that up to tensoring with line bundles  we have
\begin{equation}\label{eqn:Poincform}
\kq_\eta^{\times 2}\cong(\pi\times\text{id})^\ast\kp_\eta,
\end{equation}
which is the twisted case of a classical formula, cf.\ \cite[(1.1)]{Arinkin}.
In concrete terms, it comes down to the description of the fibre of the Poincar\'e bundle
at a point $[\ko(\sum a_ix_i)\times L]\in \Pic^0\times \Pic^0$ as
$\bigotimes L(x_i)^{a_i}$. See also \cite[\S\! 6]{MilneJac} or \cite[\S\! 7]{Polishchuk}. More globally, if $\kp_0$ denotes the Poincar\'e bundle
on $\Pic^0(\kc_\eta)\times\kc_\eta\cong\kc_\eta\times\Pic^0(\kc_\eta)$, then using the projection
$\psi\colon \Pic^0\times \kc_\eta\times\Pic^0\to \Pic^0\times\Pic^0$
the Poincar\'e bundle on $\Pic^0(\Pic^0)\times\Pic^0\cong \Pic^0\times\Pic^0$ is the line bundle
\begin{equation}\label{eqn:PoincDet}
\det R\psi_\ast(p_{12}^\ast\kp_0\otimes p_{23}^\ast\kp_0)\otimes \det R\psi_\ast(\ko)\otimes \det R\psi_\ast(p_{12}^\ast\kp_0)^{-1}\otimes
\det R\psi_\ast(p_{23}^\ast\kp_0)^{-1}.
\end{equation}
Note that if the Poincar\'e bundle only exists as a $(\beta\times 1)$-twisted sheaf
on $\Pic^0(\kc_\eta)\times\kc_\eta$, then (\ref{eqn:PoincDet}) is a priori
$(\beta^m\times \beta^n)$-twisted, where the powers $m$ and $n$
are caused by the various tensor products and determinants in the formula.
As it turns out, they are both trivial and (\ref{eqn:PoincDet}) does define an untwisted line bundle.

The twisted case is similar. More precisely, if $\kp_0$ denotes the
Poincar\'e bundle on $\Pic^0(\kc_\eta)\times\kc_\eta$\TBC{Existence?}
\smallskip}
\EV{Clearly, (\ref{eqn:Poincform}) implies $(\alpha_\eta^{\times 2}\times 1)=(\pi^\ast\theta_\eta\times 1)$
as classes in $\Br(\kc_\eta^2\times\Pic_\alpha^\chi(\kc_\eta))$,
because then $\kq_\eta^{\times 2}\otimes(\pi\times\text{id})^\ast\kp_\eta^{-1}$ is a line bundle twisted with respect to the class $(\alpha_\eta^{\times 2}\times 1)\cdot(\pi^\ast\theta_\eta\times 1)^{-1}$. But it remains to prove that this implies that the class $\alpha_\eta^{\times 2}\cdot\pi^\ast\theta_\eta^{-1}$ in $\Br(\kc_\eta^2)$ is trivial.
For this we use the Leray spectral sequence for the projection
$q\colon \kc_\eta^2\times\Pic_\alpha^\chi(\kc_\eta)\to\kc_\eta^2$ which
gives the exact sequence
$$\xymatrix{\Pic(\kc_\eta^2\times\Pic_\alpha^\chi(\kc_\eta))\ar[r]&H^0(\kc_\eta^2,R^1q_\ast\Gm)\ar[r]^\delta& H^2(\kc_\eta^2,q_\ast\Gm).}$$
We use  $H^2(\kc_\eta^2,q_\ast\Gm)\cong\Br(\kc_\eta^2)$ and $H^0(\kc_\eta^2,R^1q_\ast\Gm)\cong{\bf Pic}_q(\kc_\eta^2)$, where
${\bf Pic}_q$ denotes the relative Picard functor for the projection $q$.\smallskip

The two twisted line bundles $\kq_\eta^{\times 2}$ and $(\pi\times\text{id})^\ast\kp$ can both be viewed as elements in 
${\bf Pic}_q(\kc_\eta^2)$ and their images under the boundary
map $\delta$ are the two classes $\alpha_\eta^{\times 2}$ and $\pi^\ast\theta_\eta$. Hence, if $\kq_\eta^{\times 2}\otimes(\pi\times\text{id})^\ast\kp_\eta^{-1}$ is an untwisted line bundle,
so defining an element in $\Pic(\kc_\eta^2\times\Pic_\alpha^\chi(\kc_\eta))$,
then $(\alpha_\eta^{\times 2}\times 1)\cdot(\pi^\ast\theta_\eta\times 1)^{-1}$
is the trivial class in $\Br(\kc_\eta^2)$.}
\end{proof}

\subsection{Twisted degree two K3 surfaces}\label{sec:tw}
We recall the following notation from \S\ \ref{sec:DbJac}: If $(S,H)$ is a polarised K3 surface of degree two with a Brauer class $\alpha\in \Br(S)$, then we denote by $M_{S,\alpha}(0,H)$ 
the moduli spaces of $H$-semi-stable $\alpha$-twisted sheaves on $S$ 
that are direct images of $\alpha$-twisted sheaves of rank one on curves $C\in |H|$.
This moduli space has infinitely many components and the Fano variety $F_X$ singles out one of them.

\begin{prop}\label{prop:K3degtwo}
Assume $d/2$ a perfect square. Then for all smooth cubic fourfolds
$X$ contained in a certain Zariski dense open subset of $\kc_d$
there exists a polarised K3 surface
$(S,H)$ of degree two together with a Brauer class $\alpha\in \Br(S)$ such that
$$\ka_X\cong\Db(S,\alpha).$$
Furthermore, the equivalence induces a birational correspondence between the Fano variety $F_X$ and one fine component $M_\alpha$ of $M_{S,\alpha}(0,H)$:
$$F_X\sim M_\alpha.$$
\end{prop}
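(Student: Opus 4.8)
The plan is to produce the two assertions of Proposition~\ref{prop:K3degtwo} in sequence: first the existence of a degree-two twisted K3 model $(S,\alpha)$ for $\ka_X$ on a dense open subset of $\kc_d$, and then the birational correspondence $F_X\sim M_\alpha$, which is the genuinely new part.

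\textbf{Step 1: the twisted K3 model.} Since $d/2$ is a perfect square, condition $\rm(\ast\ast'')$ holds, which implies $\rm(\ast\ast')$, so by \cite{HuyComp} (and \cite{BLMNPS} for all $X$, but generically suffices here) there is a twisted K3 surface $(S',\alpha')$ with $\ka_X\cong\Db(S',\alpha')$. I would then use lattice-theoretic control of Hassett divisors: $X\in\kc_d$ carries a rank-two sublattice $K_d\subset H^{2,2}(X,\ZZ)$, hence $\widetilde H(\ka_X,\ZZ)$ contains a corresponding algebraic class. On a dense open subset of $\kc_d$ one arranges $\rho(\widetilde H(\ka_X,\ZZ))$ to be exactly rank two, so that the only algebraic classes are the span of $K_d$; the perfect-square hypothesis guarantees that this lattice contains a primitive isotropic vector, which is what forces the existence of a K3 surface model of \emph{degree two} together with a Brauer class $\alpha$ of appropriate order. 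Concretely, one runs the derived-Torelli / twisted-Mukai-pairing argument à la \cite[Ch.~16.4]{HuyK3}: a primitive isotropic class $v$ with $v^2=0$ in the algebraic lattice produces, via the twisted Fourier--Mukai formalism, an equivalence $\ka_X\cong\Db(S,\alpha)$ where $S=M_{(S',\alpha')}(v)$ is a moduli space of twisted sheaves, and the residual polarisation class computes to have square $2$, so $(S,H)$ is a degree-two K3. Genericity also lets us assume all curves in $|H|$ are integral, matching the hypotheses of \S\ref{sec:DbJac}.

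\textbf{Step 2: the birational correspondence.} With $\ka_X\cong\Db(S,\alpha)$ fixed, I would interpret $F_X$ as a moduli space of objects in $\ka_X$: by the Bridgeland-stability picture of cubic fourfolds (Bayer--Lahoz--Macr\`i--Stellari, cf.\ \cite{BLMNPS}), $F_X$ is isomorphic to a moduli space $M_\sigma(\ka_X,v_{F})$ for the Mukai vector $v_F$ of the class of a line, and under the equivalence $\ka_X\cong\Db(S,\alpha)$ this becomes a moduli space $M_{S,\alpha}(w)$ of $\alpha$-twisted sheaves on $S$ for the transported Mukai vector $w$. The numerical computation — using that $H^2=2$ and tracking $w$ through the equivalence — should show $w=(0,H,\chi)$ for the relevant $\chi$, i.e.\ $w$ is the class of an $\alpha$-twisted rank-one sheaf supported on a curve in $|H|$. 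For a generic polarisation of a \emph{generic} $(S,\alpha)$, Gieseker/Bridgeland stability for $w$ agree up to wall-crossings that are birational modifications, and the chamber realising $F_X$ differs from the one realising the Gieseker moduli space $M_\alpha=M_{S,\alpha}(0,H,\chi)$ by crossing totally semistable walls, each inducing a birational map. Hence $F_X\sim M_\alpha$. That $M_\alpha$ is a \emph{fine} component (admits a universal twisted sheaf $\kq$) is arranged by choosing, on the dense open locus, $v_F$ primitive and coprime to $H^2$, a standard consequence of the lattice data being generic in $\kc_d$.

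\textbf{Main obstacle.} The delicate point is Step~2: identifying $F_X$ precisely with a \emph{Gieseker}-type moduli space of sheaves supported on curves in $|H|$, up to birational equivalence, and pinning down the Mukai vector as $(0,H,\chi)$ rather than some Fourier--Mukai transform of it. This requires the full strength of Bridgeland stability conditions on $\ka_X$ and $\Db(S,\alpha)$ and a careful wall-and-chamber analysis to guarantee that the relevant moduli spaces are birational (and of K3$^{[2]}$-type, so that Proposition~\ref{prop:KNADM} later applies). Restricting to a Zariski dense open subset of $\kc_d$ — as the statement does — is exactly what makes the lattice and stability arguments tractable, since one can then assume minimal Picard rank and genericity of all auxiliary polarisations; extending to all of $\kc_d$ would demand the finer control over stability that the remark after Theorem~\ref{thm:rephrase} flags as missing.
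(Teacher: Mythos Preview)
Your approach inverts the paper's logic, and the inversion creates a genuine gap. You first produce an equivalence $\ka_X\cong\Db(S,\alpha)$ and then try to identify the transported Mukai vector of $F_X$ as $(0,H,\chi)$. But under an \emph{arbitrary} equivalence there is no reason the class $\lambda_1\in\widetilde H^{1,1}(\ka_X,\ZZ)$ of a line lands on a vector of the form $(0,H,\chi)$ rather than some other square-$2$ class in $\widetilde H^{1,1}(S,\alpha,\ZZ)$; your sentence ``the numerical computation \dots should show $w=(0,H,\chi)$'' hides precisely this. To make your route work you would have to either construct the equivalence in Step~1 so that it sends $\lambda_1$ to $(0,H,\chi)$ from the outset, or post-compose with a twisted auto-equivalence of $\Db(S,\alpha)$ realising the required isometry of Mukai lattices, and then check the resulting moduli problem is still birational to a Gieseker one. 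The authors in fact contemplated essentially your route (via Brakkee for Step~1 and Bayer--Lahoz--Macr\`i--Stellari for Step~2) and abandoned it because the needed references for the twisted wall-crossing and for landing in the distinguished component are not available. Your ``main obstacle'' paragraph correctly locates the difficulty but understates it: it is not merely delicate, it is where the argument currently breaks.

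The paper proceeds in the opposite order. It invokes \cite[Thm.~1.2]{HuMa} directly: for $F_X$ non-special (an open condition), that result already furnishes a birational correspondence $F_X\sim M_\alpha$ with $M_\alpha$ a moduli space of $\alpha$-twisted rank-one sheaves on curves in a linear system $|H|$ on some K3 surface $S$. The degree $(H)^2=2$ then drops out of the dimension formula $2(H)^2=\dim M_\alpha=\dim F_X=4$. Only \emph{after} this does the paper extract the categorical equivalence: the birational map gives a Hodge isometry $H^2(F_X,\ZZ)\cong H^2(M_\alpha,\ZZ)$, which via $\lambda_1^\perp\cong v^\perp$ extends (by matching $\lambda_1\mapsto\pm v$ and using discriminant groups) to $\widetilde H(\ka_X,\ZZ)\cong\widetilde H(S,\alpha,\ZZ)$, and then \cite[Thm.~1.4]{HuyComp} yields $\ka_X\cong\Db(S,\alpha)$. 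Fineness of $M_\alpha$ is obtained by sending the second $A_2$-generator $\lambda_2$ across the isometry to a class $w$ with $(v,w)=1$, giving the standard criterion. The upshot is that by starting from the geometric side the Mukai vector is $(0,H,\chi)$ by construction, and the equivalence is manufactured to respect it; your route has to force this compatibility by hand and, as it stands, does not.
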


\begin{proof}
\EV{\TBC{I am still not sure what the best way here is. Using [HuMa] is very short,
but does not necessarily give $\ka_X\cong \Db(S,\alpha)$. Using stability condition has the problem that there is no proper reference.}
The existence of an equivalence between $\ka_X$ and
the derived category of some twisted K3 surface is a consequence of \cite[Thm.\ 1.3]{HuyComp}. Indeed, the hypothesis on $d$ implies that there exists a Hodge isometry
$\widetilde H(\ka_X,\ZZ)\cong\widetilde H(S,\alpha,\ZZ)$ for some twisted K3 surface $(S,\alpha)$. The result in \cite{HuyComp} is only proved for the generic cubic in $\kc_d$, but by applying  \cite{BLMNPS} it extends to all $X\in \kc_d$. \smallskip

There is no distinguished choice for $(S,\alpha)$ and, even worse, $S$ might
not be of degree two. To ensure this, it is most convenient to adapt the global picture developed by Brakkee \cite{Brak}: The moduli space of polarised K3 surfaces $(S,H)$ of degree two together with a Brauer class $\alpha$ dominates the Hassett divisor $\kc_{2r^2}$. The correspondence relies on the period description of both moduli spaces and maps $(S,H,\alpha)$ to a cubic fourfold $X$ for which there exists a Hodge isometry $\widetilde H(\ka_X,\ZZ)\cong \widetilde H(S,\alpha,\ZZ)$. Applied to $r^2=d/2$, this shows that for $X\in \kc_d$ generic we can pick $S$ of degree two and $\alpha\in \Br(S)$ with $|\alpha|^2=d/2$.\smallskip

According to Kuznetsov and Markushevich \cite{KuMa}, the Fano variety $F_X$
can be viewed as a moduli space of objects in $\ka_X$.  Hence, via $\ka_X\cong\Db(S,\alpha)$, it can also be regarded as a moduli space of objects in $\Db(S,\alpha)$. According to Bayer et al
\cite{BLMS}, there exists a stability condition $\sigma$ on $\Db(S,\alpha)$ such that $F_X$
is actually a moduli space of $\sigma$-stable objects. Furthermore, $\sigma$ is contained
in the distinguished component and, therefore, $F_X$ is birational to a moduli
space $M_{S,\alpha}$ of $\alpha$-twisted stable sheaves on $S$.\TBC{Is there a quotable reference? Bayer et al 
say something in the case that there are no $(-2)$ classes. Is it written somewhere
that we get a stability condition in the in the distinguished component? Also is it somewhere written that also in the twisted case moduli spaces of stable objects are
birational to standard moduli spaces?}\smallskip

Lastly, we have to argue that we can arrange the situation such that $M_{S,\alpha}$
parametrises sheaves supported in curves in $|H|$.....
\smallskip}

As the assertion only concerns generic cubics in $\kc_d$ and is in itself  an open condition, we may assume  that $F_X$ 
is non-special, i.e.\ $(H^{2,0}\oplus H^{0,2})(F_X,\QQ)=0$.\footnote{The condition to be non-special is expected to be superfluous.} 
Then, according to \cite[Thm.\ 1.2]{HuMa}, $F_X$ is birational to a moduli space $M_\alpha$ of stable $\alpha$-twisted sheaves on a K3 surface $S$ that are direct images of $\alpha$-twisted sheaves of rank one on curves $C\in |H|$
in a certain complete linear system.
 Now, for a generic cubic $X$ in $\kc_d$, the line bundle $H$  is ample
with $2(H)^2=\dim M_\alpha=\dim F_X=4$. Hence,  $(S,H)$ is a polarised K3 surface of degree two. The formula also shows that $M_\alpha$ is projective, for the Mukai vector of the considered sheaves is primitive and the polarisation with respect to which stability is taken can be assumed to be generic.\smallskip

The birational correspondence $F_X\sim M_\alpha$ induces a Hodge isometry
\begin{equation}\label{eqn:HodgeFM}
H^2(F_X,\ZZ)\cong H^2(M_\alpha,\ZZ)
\end{equation}
and the two sides can be described in terms
of $\widetilde H(\ka_X,\ZZ)$ and $\widetilde H(S,\alpha,\ZZ)$ as 
$$H^2(F_X,\ZZ)=\lambda_1^\perp\subset \widetilde H(\ka_X,\ZZ)\text{ and } H^2(M_\alpha,\ZZ)\cong v^\perp\subset \widetilde H(S,\alpha,\ZZ).$$
These are results of Addington \cite[Cor.\ 8]{Add}, cf.\ \cite[Cor.\ 6.5.1]{HuyCubics},
and Yoshioka \cite[Thm.\ 3.19]{Yos}. Here, $\lambda_1\in \widetilde H^{1,1}(\ka_X,\ZZ)$ is the image of one of the two standard generators of $A_2$ under the distinguished embedding $A_2\,\hookrightarrow \widetilde H^{1,1}(\ka_X,\ZZ)$, cf.\ \cite[Ch.\ 6.5.4]{HuyCubics}, and $v\in \widetilde H^{1,1}(S,\alpha,\ZZ)$ is the Mukai vector of the sheaves parametrised by $M_\alpha$.\smallskip

The Hodge isometry (\ref{eqn:HodgeFM}) extends to a Hodge isometry
$$H^2(F_X,\ZZ)\oplus^\perp \ZZ\cdot\lambda_1\cong H^2(M_\alpha,\ZZ)\oplus^\perp \ZZ\cdot v$$
by  sending $\lambda_1$ to $\pm v$ and further, choosing the sign according to the
map  between the discriminant groups induced by (\ref{eqn:HodgeFM}), to a Hodge isometry
of the index two over-lattices
\begin{equation}\label{eqn:HH}
\widetilde H(\ka_X,\ZZ)\cong \widetilde H(S,\alpha,\ZZ).
\end{equation}
According to \cite[Thm.\ 1.4]{HuyComp}, the latter then proves $\ka_X\cong \Db(S,\alpha)$ for a Zariski open and dense set of cubics $X\in \kc_d$.\smallskip

It remains to show that the moduli space $M_\alpha$ is indeed fine.
For this we use the image $\lambda_2\in \widetilde H^{1,1}(\ka_X,\ZZ)$ of the second generator of $A_2$ and its image $w\in\widetilde H^{1,1}(S,\alpha,\ZZ)$ under
(\ref{eqn:HH}). The latter satisfies
$(v,w)=1$ and, therefore, the standard criterion for the existence of a universal family applies,
cf.\ \cite[Ch.\ 4.6]{HuLe}.\end{proof}

With some effort one can prove that the assertion of the proposition does in fact hold for all smooth cubics in $\kc_d$ for which $d/2$ is a perfect square. For this one uses  \cite[Cor.\ 29.7 \& Prop.\ 33.1]{BLMNPS} which extends \cite[Thm.\ 1.4]{HuyComp}
from the generic cubic in $\kc_d$ to all of them. Note that specialising the birational correspondence $F_X\sim M_\alpha$ again produces a birational correspondence.
However, in the discussion in \S\! \ref{sec:DbJac} and, in particular, in Proposition \ref{prop:obst} we do not know how to drop the assumption that $X\in \kc_d$ or the
corresponding surface $(S,H)$ are generic, and so Theorem \ref{thm:rephrase} is at this
time only proved for generic cubics $X$.


\subsection{End of proof of Theorem \ref{thm:rephrase}}
Assume $d/2$ is a perfect square. Then, according to Proposition
\ref{prop:K3degtwo}, there exists a non-empty Zariski open subset $U\subset \kc_d$
such that for any $X\in U\subset \kc_d$ one finds a polarised K3 surface $(S,H)$ of degree two with $\ka_X\cong\Db(S,\alpha)$
for a certain Brauer class $\alpha\in \Br(S)$. After possibly shrinking $U$, we may
assume that $H$ is very ample and that all curves in the linear system $|H|$ are integral.\smallskip

Furthermore, still according to Proposition \ref{prop:K3degtwo}, the Fano variety $F_X$ of $X$
is birational to a certain fine moduli space $M_\alpha$ of $H$-semi-stable $\alpha$-twisted sheaves, which are of rank one on the curves in $|H|$. Then, by the original untwisted result of
Kawamata and Namikawa, see Proposition \ref{prop:KNADM},
we know
$$\Db(F_X)\cong \Db(M_\alpha).$$
Proposition \ref{prop:obst} describes the right hand side as
$$\Db(M_\alpha)\cong \Db(M,\theta),$$
where the moduli space $M=M(0,H,-1)$ is a smooth compactification of
the relative Jacobian of the smooth family $\kc\to|H|_{\text{sm}}$ and $\theta\in\Br(M)$ is the obstruction to the existence of a twisted universal sheaf on $M\times_{|H|}M_\alpha$.\smallskip

By virtue of the discussion in \S\! \ref{sec:obstruction}, the moduli space $M$ is birational to the Hilbert scheme $S^{[2]}$ and under this birational correspondence $\theta\in \Br(M)$
corresponds to $\alpha^{[2]}\in \Br(S^{[2]})$, cf.\ Proposition \ref{prop:IDBr}.
Thus, by again applying Proposition \ref{prop:KNADM}, we find
$$\Db(M,\theta)\cong\Db(S^{[2]},\alpha^{[2]}).$$
Eventually, by the twisted BKR equivalence, cf.\ Proposition \ref{prop:twBKR}
$$\Db(S^{[2]},\alpha^{[2]})\cong\Db(S,\alpha)^{[2]}\cong\ka_X^{[2]}.$$
The string of equivalences combined now reads
$$\Db(F_X)\cong \Db(M_\alpha)\cong \Db(M,\theta)\cong\Db(S^{[2]},\alpha^{[2]})\cong
\Db(S,\alpha)^{[2]}\cong\ka_X^{[2]},$$
which concludes the proof of Theorem \ref{thm:rephrase}.\qed

\section{Mukai lattices}
The main goal of this section is to prove Theorem \ref{thm3}.
 We will furthermore compare
the naive Mukai lattice $\widetilde H(F_X,\ZZ)$ with the integral Hodge structure
introduced by Taelman \cite{Taelman} and further developed by Beckmann \cite{Beck}.
The section concludes with a discussion of the auto-equivalence $\sign$ of $\ka_X^{[2]}$,
see \S\! \ref{sec:Hilbsquare}, and its action on $\widetilde H(\ka^{[2]},\ZZ)$.


\subsection{Proof of Theorem \ref{thm3}} By declaring $U$ to be of type $(1,1)$,
we extend the weight two Hodge structure
$\widetilde H(\ka_X,\ZZ)$ to $\widetilde H(\ka_X,\ZZ)\oplus U$. We then consider the following two classes: $\lambda_1\in A_2\subset  \widetilde H^{1,1}(\ka_X,\ZZ)$
and $\gamma=e+f \in U\subset\widetilde H^{1,1}(\ka_X^{[2]},\ZZ)$, where
$e$ and $f$ are the standard generators of the hyperbolic plane $U$. Note that
$(\lambda_1)^2=(\gamma)^2=2$.\smallskip

Then there exist natural Hodge isometries
\begin{align*}
        \widetilde H(F_X,\ZZ) & = H^2(F_X,\ZZ) \oplus U \cong\lambda_1^\perp\subset \widetilde H(\ka_X,\ZZ) \oplus U \\
        \widetilde H(\ka_X^{[2]},\ZZ) &= \widetilde H(\ka_X,\ZZ) \oplus \ZZ\cdot\delta\cong\gamma^\perp \subset \widetilde H(\ka_X,\ZZ) \oplus U,
    \end{align*}
where the second isometry sends the class $\delta$ to $e-f\in U$.\smallskip

Next we observe that the two transcendental lattices $ T(F_X)\subset \widetilde H(F_X,\ZZ)$
    and $T(\ka_X^{[2]})\subset \widetilde H(\ka^{[2]}_X,\ZZ)$ are Hodge isometric to each other. Indeed, by construction we have
    $$T(F_X)\cong T(\ka_X)\cong T(\ka_X^{[2]}).$$
Furthermore, any Hodge isometry between the two can be extended to a Hodge isometry
 $$T(F_X)\oplus\ZZ\cdot\lambda_1\cong T(\ka_X^{[2]}) \oplus \ZZ\cdot \gamma$$ sending
 $\lambda_1$ to $\gamma$ and further to
\begin{equation}\label{eqn:HHka}
\widetilde H(\ka_X,\ZZ)\oplus U\cong \widetilde H(\ka_X,\ZZ)\oplus U.
\end{equation}
For the existence of the latter we are using a result of Nikulin, cf.\ \cite[Thm.\ 14.1.12]{HuyK3}, and
the fact that the complement $\widetilde H^{1,1}(F_X,\ZZ)$ of the sub-lattice $T(F_X)\oplus\ZZ\cdot\lambda_1\subset\widetilde H(\ka_X,\ZZ)\oplus U$ contains the hyperbolic plane $U$. Eventually, observe that since (\ref{eqn:HHka}) maps $\lambda_1$ to $\gamma$, it induces
a Hodge isometry between their orthogonal complements:
$$\widetilde H(F_X,\ZZ)\cong\lambda_1^\perp\cong\gamma^\perp\cong \widetilde H(\ka_X^{[2]},\ZZ)$$
which concludes the proof of Theorem \ref{thm3}.\qed
    
\begin{remark}
A priori, we have no control over the induced isometry between the algebraic parts
$$\widetilde H^{1,1}(F,\ZZ)\cong \widetilde H^{1,1}(\ka_X^{[2]},\ZZ).$$
However, for general $X\in \kc$, both lattices are of rank three. More precisely,
$$\widetilde H^{1,1}(F,\ZZ)\cong \ZZ \cdot g\oplus U~
\text{ and }  ~\widetilde H^{1,1}(\ka_X^{[2]},\ZZ)\cong\widetilde H^{1,1}(\ka_X,\ZZ)\oplus \ZZ\cdot \delta\cong A_2\oplus \ZZ\cdot \delta,$$ where $g$ is the Pl\"ucker polarisation satisfying $(g)^2=6$. Abstractly, the two lattices $\ZZ\cdot g\oplus U$ and $A_2\oplus\ZZ\cdot\delta$ are isometric, but it seems there is no distinguished isometry between the two.\footnote{For a reader of a more sceptical disposition this
may cast some doubt upon Conjecture \ref{conj}.}
At least, we could not come up with an isometry that would look more natural than others. Here
are two examples: (i) $e\mapsto -\lambda_2+\delta$, $f\mapsto \lambda_1+\lambda_2-\delta$, $g\mapsto2\lambda_1+4\lambda_2-3\delta$ and (ii)  $e\mapsto -\lambda_2-\delta$, $f\mapsto 3\lambda_1+8\lambda_2+7\delta$, $g\mapsto2\lambda_1+10\lambda_2+9\delta$.\smallskip

The situation is easier for the transcendental part, which was exploited already in the above proof.
For $X$ general, the transcendental lattices for both Hodge structures are
given by
$$T(F_X)\cong H^2(F_X,\ZZ)_{\text{pr}}\cong H^4(X,\ZZ)_{\text{pr}}^-~\text{ 
and }~T(\ka_X) \cong A_2^\perp \cong H^4(X,\ZZ)_{\text{pr}}^-,$$
both isometric to the abstract lattice $E_8(-1)^{\oplus 2}\oplus U^{\oplus 2}\oplus A_2(-1)$, cf.\ \cite[Ch.\ 6.5]{HuyCubics}.

\end{remark}

\begin{remark}
As a side remark, for a generic Pfaffian cubic fourfold Beauville and Donagi \cite{BD} constructed an isomorphism $F_X\cong S^{[2]}$,
which induces first the Pfaffian Hodge isometry 
$H^2(F_X,\ZZ)\cong H^2(S^{[2]},\ZZ)$ and then $\widetilde H(F_X,\ZZ)\cong \widetilde H(S^{[2]},\ZZ)$. The latter sends the Pl\"ucker polarisation $g$ on $F_X$ to $2h-5\delta\in H^2(S,\ZZ)\oplus\ZZ\cdot\delta\subset\widetilde H(S^{[2]},\ZZ)$. Observe, however, that
for the general smooth cubic $X$ there is no isometry $\ZZ\cdot g\oplus U\cong A_2\oplus \ZZ\cdot\delta$ that would send $g$ to a class $\alpha-5\delta$ with $\alpha\in A_2$.
\smallskip

In other words, the Pfaffian Hodge isometry does not deform to a Hodge isometry for the general cubic $X$ and, indeed, otherwise $F_X$ and $S^{[2]}$ would be birational even for
small deformations away from the Pfaffian locus. Similarly, one can show that
the equivalence $\Db(F_X)\cong \ka_X^{[2]}$ provided by Theorem \ref{thm2}  for Lagrangian fibred $F_X$ does not deform to the general $X$. Once the Beckmann--Taelman techniques are extended to also cover equivalences with $\ka_X^{[2]}$ this would also follow
from the description of the induced Hodge isometry between their associated Hodge structures. Indeed, the sign involution on $\ka_X^{[2]}$ defines an auto-equivalence of $\Db(F_X)$ which does not deform away from the Lagrangian locus.
\end{remark}
\subsection{Comparison to the Beckmann--Taelman twist}\label{sec:BTtwist}
For a general hyperk\"ahler manifold $Z$, even when assumed to be of $\text{K3}^{[n]}$-type, the naive
weight two Hodge structure $\widetilde H(Z,\ZZ)=H^2(Z,\ZZ)\oplus U$ is not necessarily a derived
invariant. However, as shown by Taelman \cite{Taelman} for $n=2$ and later by Beckman 
\cite{Beck} for arbitrary $n$, a certain modification of it is. \smallskip

We briefly recall this construction for $n=2$, which makes use (but is ultimately independent) of an additional class $\delta\in H^2(Z,\ZZ)$ with $(\delta)^2=-2$ and
$\text{div}(\delta)=2$. Instead of $\widetilde H(Z,\ZZ)$ one considers its image $$\Lambda_Z\coloneqq B_{-\delta/2}(\widetilde H(Z,\ZZ))\subset \widetilde H(Z,\QQ)\coloneqq \widetilde H(Z,\ZZ)\otimes\QQ$$
under the B-field transform
$$\xymatrix@C=17pt{B_{-\delta/2}\colon \widetilde H(Z,\ZZ)\ar@{^(->}[r]& \widetilde H(Z,\QQ)
}$$
equipped with the weight two Hodge structure induced by the one on $\widetilde H(Z,\QQ)$.
Recall that the B-field transform $B_{-\delta/2}$ sends $\lambda\in H^2(Z,\ZZ)$ to $\lambda-(\delta.\lambda)/2\cdot f\in H^2(Z,\ZZ)\oplus U$, $e\in U$ to $-\delta/2+e-(1/4)\cdot f$, and preserves $f$. For the Mukai lattice of a K3 surface, $B_\gamma$
is nothing but multiplication with $\exp(\gamma)$, which might help to memorise these formulae.\smallskip

So, as a lattice, $\Lambda_Z$ is isometric to $\widetilde H(Z,\ZZ)$ via $B_{-\delta/2}$. However, unless $\delta$ is of type $(1,1)$, the isometry $B_{-\delta/2}$ does not respect the Hodge structure. Note that for $Z=S^{[2]}$ one can choose $\delta$ such that $2\delta$ is the class of the exceptional divisor of the Hilbert--Chow morphism. Thus, in this case the naive Hodge structure 
 is Hodge isometric to the
Beckmann--Taelman twist:
$$\widetilde H(S^{[2]},\ZZ)\cong \Lambda_{S^{[2]}}.$$
This can be generalised easily to the following.

\begin{lem}
Assume the class $-\delta/2$ is of the form $\gamma_1+\gamma_2$
with $\gamma_1\in H^{1,1}(Z,\QQ)$ and $\gamma_2\in H^2(Z,\ZZ)$. Then there
exists a Hodge isometry between  the  naive Hodge structure and the
Beckmann--Taelman twist:
\begin{equation}\label{eqn:naiveHilb}
\widetilde H(Z,\ZZ)\cong \Lambda_Z.
\end{equation}
\end{lem}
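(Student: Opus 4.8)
The plan is to reduce the statement to the group law for B-field transforms together with the evenness of the Beauville--Bogomolov lattice. Recall first the elementary fact that $\gamma\mapsto B_\gamma$ is a homomorphism from the additive group $H^2(Z,\QQ)$ into the orthogonal group of the Mukai pairing on $\widetilde H(Z,\QQ)$; in particular $B_{\gamma_1}\circ B_{\gamma_2}=B_{\gamma_1+\gamma_2}$, which one reads off directly from the formulae for $B_\bullet$ recalled above, just as $\exp(\gamma_1)\exp(\gamma_2)=\exp(\gamma_1+\gamma_2)$ for the Mukai lattice of a K3 surface. Applying this to the decomposition $-\delta/2=\gamma_1+\gamma_2$ one obtains
\[
\Lambda_Z=B_{-\delta/2}\bigl(\widetilde H(Z,\ZZ)\bigr)=B_{\gamma_1}\Bigl(B_{\gamma_2}\bigl(\widetilde H(Z,\ZZ)\bigr)\Bigr),
\]
so the task splits into treating $B_{\gamma_2}$ and $B_{\gamma_1}$ separately.

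The first step is to show that $B_{\gamma_2}$ preserves the integral lattice. On $H^2(Z,\ZZ)$ it is $\lambda\mapsto\lambda+(\gamma_2.\lambda)\,f$, which is integral since the Beauville--Bogomolov form is; it fixes $f$; and it sends $e$ to $\gamma_2+e+\tfrac12(\gamma_2.\gamma_2)\,f$, which is again integral because $H^2(Z,\ZZ)$ with the Beauville--Bogomolov form is an even lattice (for $Z$ of $\text{K3}^{[2]}$-type it is $U^{\oplus 3}\oplus E_8(-1)^{\oplus 2}\oplus\ZZ(-2)$), so that $(\gamma_2.\gamma_2)\in 2\ZZ$. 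Hence $B_{\gamma_2}$ restricts to an isometry of $\widetilde H(Z,\ZZ)$ and the displayed identity simplifies to $\Lambda_Z=B_{\gamma_1}\bigl(\widetilde H(Z,\ZZ)\bigr)$.

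The second step is to check that $B_{\gamma_1}$ is a Hodge isometry of $\widetilde H(Z,\QQ)$. It is an isometry of the Mukai pairing by the homomorphism property above, and since $\widetilde H^{2,0}(Z)=H^{2,0}(Z)$ while $(\gamma_1.\sigma)=0$ for every $\sigma\in H^{2,0}(Z)$ by the Hodge--Riemann relations (as $\gamma_1$ is of type $(1,1)$), the $\CC$-linear extension of the formula gives $B_{\gamma_1}(\sigma)=\sigma+(\gamma_1.\sigma)\,f=\sigma$, so $B_{\gamma_1}$ fixes the $(2,0)$-line of $\widetilde H(Z,\CC)$; a weight two Hodge structure of K3 type on a fixed quadratic space being determined by that line, $B_{\gamma_1}$ is a Hodge isometry. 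Its restriction is therefore a Hodge isometry from $\widetilde H(Z,\ZZ)$ onto $B_{\gamma_1}\bigl(\widetilde H(Z,\ZZ)\bigr)=\Lambda_Z$, the right hand side carrying precisely the Hodge structure induced from $\widetilde H(Z,\QQ)$ used to define $\Lambda_Z$. This yields the asserted Hodge isometry (\ref{eqn:naiveHilb}).

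No genuine obstacle is expected here: the argument is just the formal group law for B-field transforms combined with the two observations that integrality of $B_{\gamma_2}$ rests on the evenness of the Beauville--Bogomolov lattice, and that $B_{\gamma_1}$ respects the Hodge structure because $\gamma_1$ is orthogonal to $H^{2,0}(Z)$ for type reasons. The only point that calls for any care is the evenness input, which is exactly where the hypothesis that $\gamma_2$ is \emph{integral} (rather than merely rational) enters.
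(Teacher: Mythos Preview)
Your proof is correct and follows essentially the same approach as the paper: factor $B_{-\delta/2}=B_{\gamma_1}\circ B_{\gamma_2}$, use integrality of $\gamma_2$ to see that $B_{\gamma_2}$ preserves $\widetilde H(Z,\ZZ)$, and use that $\gamma_1$ is of type $(1,1)$ to conclude that $B_{\gamma_1}$ is a Hodge isometry. Your write-up is in fact more explicit than the paper's, spelling out that the evenness of the Beauville--Bogomolov lattice is what makes $B_{\gamma_2}$ integral on the image of $e$, a point the paper leaves implicit in the phrase ``since $\gamma_2$ is integral, the B-field transform $B_{\gamma_2}$ is invertible''.
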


\begin{proof} Under the given assumptions, the B-field transform
$B_{-\delta/2}$ is the composition of the two B-field transforms associated with $\gamma_1$ and $\gamma_2$, i.e.\ $
        B_{-\delta/2} = B_{\gamma_1} \circ B_{\gamma_2}$.\smallskip
        
        Since $\gamma_2$ is integral, the B-field transform $B_{\gamma_2}$ is invertible and so $B_{\gamma_2}(\widetilde H(Z,\ZZ))=\widetilde H(Z,\ZZ)$ as 
    sublattices of $\widetilde{H}(Z,\QQ)$. Furthermore, since $\gamma_1$ is algebraic,
    $
    B_{\gamma_1}\colon B_{\gamma_2}(\widetilde{H}(Z,\ZZ)) \congpf \Lambda_{Z}
    $ is a Hodge isometry. Combining both observations proves the assertion.
\end{proof}

The lemma allows us to prove a version of (\ref{eqn:naiveHilb})  for Fano varieties of lines.

\begin{prop}\label{prop:BT=naive}
For the Fano variety $F_X$ of lines contained in a smooth cubic fourfold, there exists a Hodge isometry
between  the  naive Hodge structure and the Beckmann--Taelman twist: $$\widetilde H(F_X,\ZZ)\cong \Lambda_{F_X}.$$
\end{prop}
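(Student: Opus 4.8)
The plan is to deduce Proposition \ref{prop:BT=naive} directly from the preceding Lemma by exhibiting, for $Z = F_X$, a class $\delta \in H^2(F_X,\ZZ)$ with $(\delta)^2 = -2$ and $\mathrm{div}(\delta) = 2$ for which $-\delta/2$ decomposes as $\gamma_1 + \gamma_2$ with $\gamma_1 \in H^{1,1}(F_X,\QQ)$ and $\gamma_2 \in H^2(F_X,\ZZ)$. Since $\gamma_2$ being integral is automatic once we simply take $\gamma_2 = 0$, the real content is to find a rational $(1,1)$-class $\gamma_1$ with $2\gamma_1 = -\delta$, i.e.\ to check that the half $\delta/2 \in H^2(F_X,\QQ)$ is itself of type $(1,1)$. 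Equivalently, one needs a class $\delta$ of the required square and divisibility that is algebraic.

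First I would recall the lattice-theoretic structure of $H^2(F_X,\ZZ)$: for any smooth cubic fourfold $X$, $H^2(F_X,\ZZ) \cong U^{\oplus 3} \oplus E_8(-1)^{\oplus 2} \oplus \langle -2\rangle$, the $\mathrm{K3}^{[2]}$-lattice, and the rank-one summand $\langle -2\rangle$ is generated by a distinguished class $\delta_{F}$ with $(\delta_F)^2 = -2$ and $\mathrm{div}(\delta_F) = 2$. The key point is that for the Fano variety of lines this class is \emph{always algebraic}: the Pl\"ucker polarisation $g$ with $(g)^2 = 6$ and $g$ primitive generates, together with the lattice $A_2 \cong \widetilde H^{1,1}(\ka_X,\QQ)$-part, the algebraic lattice $\widetilde H^{1,1}(F_X,\ZZ)$, and as recorded in the Remark after the proof of Theorem \ref{thm3}, for general $X$ one has $\widetilde H^{1,1}(F_X,\ZZ) \cong \ZZ\cdot g \oplus U$, which already contains a hyperbolic plane $U \subset \widetilde H^{1,1}(F_X,\ZZ)$. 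Now $\delta$, with $\delta \in \widetilde H(F_X,\ZZ) = H^2(F_X,\ZZ) \oplus U$ corresponding (under the isometry of the proof of Theorem \ref{thm3}) to $e - f$ in that very $U$, is therefore of type $(1,1)$; in particular $\delta$ is algebraic, hence $-\delta/2 = \gamma_1$ with $\gamma_1 \in H^{1,1}(F_X,\QQ)$ and $\gamma_2 = 0$, and the Lemma applies verbatim to give $\widetilde H(F_X,\ZZ) \cong \Lambda_{F_X}$.

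Concretely, I would argue as follows. The class $\delta \in H^2(F_X,\ZZ)$ used to define $\Lambda_{F_X}$ is, up to the $\mathrm{O}(H^2(F_X,\ZZ))$-ambiguity, the generator of the $\langle -2\rangle$-summand; its divisibility-two property forces it into the orbit of $\delta_F$. For $F_X$ the Hodge structure on $H^2$ is always of K3 type with transcendental lattice contained in $U^{\oplus 2} \oplus E_8(-1)^{\oplus 2} \oplus A_2(-1)$ and with the orthogonal complement (the algebraic part) of rank $\geq 2$ containing $g$; a short argument with the discriminant form shows that the $\langle -2\rangle$-summand can be chosen inside the algebraic lattice — indeed, the primitive embedding $A_2 \hookrightarrow \widetilde H^{1,1}(\ka_X,\ZZ)$ together with $H^2(F_X,\ZZ) = \lambda_1^{\perp}$ and the identification $\widetilde H(\ka_X^{[2]},\ZZ) = \widetilde H(\ka_X,\ZZ) \oplus \ZZ\cdot\delta$ exhibits $\delta$ inside the span of algebraic classes on both sides. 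Hence $\delta/2 \in H^{1,1}(F_X,\QQ)$, and taking $\gamma_1 = -\delta/2$, $\gamma_2 = 0$ in the Lemma yields the claimed Hodge isometry $\widetilde H(F_X,\ZZ) \cong \Lambda_{F_X}$, with $B_{-\delta/2}$ itself being the Hodge isometry.

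The main obstacle is making the assertion ``$\delta$ is of type $(1,1)$ on $F_X$'' airtight for \emph{all} smooth cubic fourfolds, not just the general one: one must be sure the particular $\delta$ entering the Beckmann--Taelman construction (a priori only determined up to sign, square $-2$, and divisibility $2$) can be taken inside $\widetilde H^{1,1}(F_X,\ZZ)$ even when the Picard rank of $F_X$ is minimal. This is where I would invoke that $F_X$ is a \emph{moduli space} — via the Beauville--Donagi / Plücker correspondence or, more robustly, via the fact that $H^2(F_X,\ZZ)_{\mathrm{pr}} = \lambda_1^{\perp} \cap (\text{something})$ sits inside $\widetilde H(\ka_X,\ZZ)$ with $\lambda_1, \lambda_2$ and hence a whole copy of $A_2$, equivalently a hyperbolic plane after a primitive modification, landing in the algebraic part — so that the extra $\langle -2 \rangle$ generator is forced to be algebraic. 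Once that is in place, everything else is a formal consequence of the already-proved Lemma.
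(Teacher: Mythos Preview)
Your argument has a genuine gap: the central claim that the class $\delta\in H^2(F_X,\ZZ)$ with $(\delta)^2=-2$ and $\mathrm{div}(\delta)=2$ can be chosen of type $(1,1)$ is false for the general cubic fourfold. For generic $X$ one has $H^{1,1}(F_X,\ZZ)=\ZZ\cdot g$ with $(g)^2=6$, so this rank-one positive-definite lattice contains no class of square $-2$ at all. Hence there is no way to take $\gamma_1=-\delta/2$ and $\gamma_2=0$.

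The confusion is between three different objects all called $\delta$. The $\delta$ entering the Beckmann--Taelman construction lives in $H^2(F_X,\ZZ)$ itself and is \emph{not} in general algebraic. The class $e-f$ in the auxiliary hyperbolic plane $U\subset\widetilde H(F_X,\ZZ)=H^2(F_X,\ZZ)\oplus U$ is $(1,1)$ by fiat, but it is not in $H^2(F_X,\ZZ)$ and is therefore irrelevant for the B-field $B_{-\delta/2}$. Likewise, the $\delta$ in $\widetilde H(\ka_X^{[2]},\ZZ)=\widetilde H(\ka_X,\ZZ)\oplus\ZZ\cdot\delta$ is a formal $(1,1)$-class in a different lattice; the Hodge isometry of Theorem~\ref{thm3} does not send it to the $\langle-2\rangle$-generator inside $H^2(F_X,\ZZ)$. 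Your appeals to $\widetilde H^{1,1}(F_X,\ZZ)\cong\ZZ\cdot g\oplus U$ and to the embedding $A_2\hookrightarrow\widetilde H^{1,1}(\ka_X,\ZZ)$ all concern these extended lattices, not $H^2(F_X,\ZZ)$.

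The paper's proof exploits the Lemma in a different way: it does \emph{not} try to make $\delta$ algebraic, but instead uses the Pl\"ucker class $g$, which \emph{is} always of type $(1,1)$, as the rational $(1,1)$-part. Specialising to a Pfaffian cubic, where $F_X\cong S^{[2]}$ and $g=2h-5\delta$, one reads off $-\delta/2=g/2+(2\delta-h)$; here $g/2\in H^{1,1}(F_X,\QQ)$ and $2\delta-h\in H^2(F_X,\ZZ)$. Parallel transport carries this identity to any smooth $X$, with $g$ remaining the Pl\"ucker class and $2\delta-h$ remaining integral (though typically no longer algebraic). Then the Lemma applies with $\gamma_1=g/2$ and $\gamma_2=2\delta-h$.
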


\begin{proof} Denote by $g\in H^2(F_X,\ZZ)$ the class of the Pl\"ucker polarisation. 
If $-\delta/2$ can be written as $-\delta/2=p\cdot g+ \gamma$ with
$p\in \QQ$ and $\gamma\in H^2(F_X,\ZZ)$, then the lemma applies and proves the claim.\smallskip

There are various ways to achieve this. For example, according to Beauville and Donagi \cite{BD}, when specialising $X$ to a Pfaffian cubic,
the Fano variety of lines $F_X$ specialises to the Hilbert scheme $S^{[2]}$ of a polarised K3 surface $(S,h)$ of degree $14$. Furthermore, under the induced Hodge isometry
the Pl\"ucker polarisation
$g$ becomes $2h-5\delta$, cf.\ \cite[Ch.\ 6.2.4]{HuyCubics}. In other words,
 $-\delta/2=g/2+2\delta-h$, which still holds after parallel transport back to the original $F_X$.
\end{proof}

\subsection{The involution of the Hilbert square}\label{sec:sigma}

We consider the involution $\sigma\colon\ka_X^{[2]}\to \ka_X^{[2]}$ induced by the
definition of $\ka_X^{[2]}$ as the equivariant category $$\ka_X^{[2]}\cong(\ka_X\boxtimes\ka_X)_{{\mathfrak S}_2},$$ see \S\! \ref{sec:Hilbsquare} and \cite{Elagin}.\smallskip

If $\ka_X\cong\Db(S)$, then by BKR
 $\ka_X^{[2]}\cong\Db(S)^{[2]}\cong\Db(S^{[2]})$ and so, according to \cite{Taelman},
 the induced auto-equivalence $\sigma\colon \Db(S^{[2]})\to\Db(S^{[2]})$ naturally acts on $\widetilde H(S^{[2]},\ZZ)$,
 where we use Proposition \ref{prop:BT=naive}.

\begin{lem}
The involution $\sigma\colon \Db(S^{[2]})\to\Db(S^{[2]})$ acts 
as $-{\rm id}_{\widetilde H}\oplus {\rm id}_\delta$ on
 $$\widetilde H(\Db(S)^{[2]},\ZZ)=\widetilde H(S^{[2]},\ZZ)=\widetilde H(S,\ZZ)\oplus\ZZ\cdot\delta.$$\end{lem}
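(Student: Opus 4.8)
The plan is to identify the involution $\sigma$ explicitly as a Fourier--Mukai functor on $\Db(S^{[2]})$ and then trace its action through the BKR equivalence and the Taelman correspondence. First I would recall that, under the BKR equivalence $\Db(S^{[2]})\cong\Db(S)^{[2]}=(\Db(S)\boxtimes\Db(S))_{{\mathfrak S}_2}$, an object is a pair $(E,\rho)$ with $E\in\Db(S\times S)$ and $\rho\colon E\to\iota^\ast E$ a linearisation, and $\sigma$ sends $(E,\rho)$ to $(E,-\rho)$. The key observation is that this ``flip the sign of the linearisation'' operation is itself induced by tensoring with the sign character of ${\mathfrak S}_2$, which geometrically corresponds on $S^{[2]}$ to tensoring with the square root line bundle $\ko(-\delta)$ of $\ko(-2\delta)=\ko(-E_{\mathrm{exc}})$ (the exceptional divisor of the Hilbert--Chow morphism), possibly composed with the identity. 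So the first reduction is: $\sigma$ is (up to shift/twist bookkeeping) the autoequivalence $(-)\otimes\ko_{S^{[2]}}(-\delta)$, or at least an autoequivalence whose cohomological action agrees with that of such a twist.

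Next I would compute the cohomological action of tensoring by a line bundle on the Taelman/Beckmann Mukai lattice. For a line bundle $L$ with $c_1(L)=\ell$, the induced action on $\widetilde H(S^{[2]},\ZZ)$ is the B-field-type transform $B_\ell$, i.e. multiplication by $\exp(\ell)$ in the appropriate sense: it fixes $f\in U$, sends $\lambda\in H^2$ to $\lambda + (\ell.\lambda) f + \dots$, and acts on the $e\in U$ and $\delta$ directions accordingly. Plugging in $\ell = -\delta$ with $(\delta)^2=-2$ and $\mathrm{div}(\delta)=2$, and using the explicit formula for $B_{-\delta/2}$ recalled in \S\ref{sec:BTtwist} together with the identification $\widetilde H(S^{[2]},\ZZ)\cong\Lambda_{S^{[2]}}$ from Proposition \ref{prop:BT=naive}, I would check that the resulting isometry is exactly $-\id$ on the $\widetilde H(S,\ZZ)$ summand and $+\id$ on $\ZZ\cdot\delta$. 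Concretely, one conjugates the B-field transform $B_{-\delta}$ by $B_{-\delta/2}$; since $B_{-\delta} = B_{-\delta/2}\circ B_{-\delta/2}$, the action on $\Lambda_{S^{[2]}}$ becomes $B_{-\delta/2}\circ B_{-\delta/2}\circ B_{\delta/2} = B_{-\delta/2}$ — but here one must be careful, as this is not yet visibly an involution; the point is that the correct statement involves also the sign/duality coming from the equivariant structure, and combining the line-bundle twist with the natural duality $(-)^\vee$ or with $[2]$-shifts produces the honest involution.

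The alternative, and perhaps cleaner, route I would pursue in parallel is to use the fact that $(\ka_X^{[2]})_{{\mathfrak S}_2}\cong\ka_X\boxtimes\ka_X$ (Elagin, recalled in \S\ref{sec:Hilbsquare}): the $\sigma$-invariants recover $\Db(S\times S)$, so $\sigma$ is the deck transformation of the ``double cover'' $\Db(S\times S)\to\Db(S^{[2]})$ given by the ramified quotient $\mathrm{Bl}_\Delta(S\times S)\to S^{[2]}$. Its cohomological action is then pinned down by compatibility with the pullback/pushforward along this quotient: the class $\delta$ is the ramification class and is fixed (pullback of $\delta$ is the exceptional divisor of the blow-up, which is $\sigma$-invariant with a sign already absorbed), while $\widetilde H(S,\ZZ)$, which maps isometrically onto the anti-invariant part, gets negated. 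One can make this rigorous by writing $\widetilde H(S,\ZZ)\oplus\ZZ\cdot\delta = \widetilde H(S^{[2]},\ZZ)$ and comparing with the known $\widetilde H(S\times S,\ZZ)^{{\mathfrak S}_2}$-decomposition, or simply by noting that $\sigma^2=\id$, $\sigma$ fixes the Pl\"ucker-type polarisation data, and acts as $-1$ on the transcendental lattice $T(S)=T(\ka_X)$ (since negating the linearisation negates the Fourier--Mukai kernel's class restricted to the transcendental part) — and an isometry of $\widetilde H(S,\ZZ)\oplus\ZZ\delta$ that is $-1$ on transcendental part, an involution, and compatible with the lattice structure is forced to be $-\id_{\widetilde H}\oplus\id_\delta$.

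The main obstacle I expect is the careful bookkeeping of signs and shifts: identifying precisely which autoequivalence of $\Db(S^{[2]})$ the categorical involution $\sigma$ corresponds to under BKR (tensoring by $\ko(-\delta)$ versus composing that with duality or a shift), and then verifying that the Taelman action of that autoequivalence is genuinely an involution equal to $-\id_{\widetilde H}\oplus\id_\delta$ rather than some other B-field transform. Getting the half-integral B-field $-\delta/2$ and the twist $-\delta$ to interact correctly — so that conjugation by $B_{-\delta/2}$ turns the twist-by-$\ko(-\delta)$ action into the clean diagonal form on the naive lattice $\widetilde H(S,\ZZ)\oplus\ZZ\cdot\delta$ — is the delicate computational heart of the argument, and I would want to double-check it against the known Beauville--Donagi/Pfaffian normalization and against the constraint $\sigma^2=\id$.
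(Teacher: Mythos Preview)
The paper's proof is a one-line citation to Beckmann \cite[Prop.\ 7.1]{Beck}; it does not reprove the result. Your proposal is an attempt to reconstruct that computation, and your first route --- identify $\sigma$ under BKR with tensoring by $\ko(\pm\delta)$ (up to shift) and then compute the induced isometry --- is indeed the strategy Beckmann follows. However, the two obstacles you yourself flag are real, not mere bookkeeping. First, the identification of $\sigma$ with a specific line-bundle twist under BKR is a theorem (due to Krug and others) and does not fall out of the definitions; you would need to either cite or reprove it. Second, and more seriously, the action of a line-bundle twist on the extended Mukai lattice of a hyperk\"ahler fourfold is \emph{not} the naive B-field $B_\ell$ on $H^2\oplus U$: your own computation stalls at ``not yet visibly an involution'' precisely because one must work on $\Lambda_{S^{[2]}}$ via Beckmann's extended Mukai vector to obtain the clean diagonal form. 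Filling these gaps amounts to reproducing the cited proof.

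Your second route, via Elagin's $(\ka_X^{[2]})_{{\mathfrak S}_2}\cong\ka_X\boxtimes\ka_X$ and an invariant/anti-invariant decomposition, is too loose as written. The claim that ``negating the linearisation negates the Fourier--Mukai kernel's class restricted to the transcendental part'' is not justified --- $\sigma$ changes the equivariant structure, not the underlying object, so its cohomological action is not visible at the level of the kernel's Chern character on $S\times S$. And even granting that $\sigma$ acts by $-1$ on $T(S)$, this together with $\sigma^2=\id$ does not determine the action on all of $\widetilde H(S,\ZZ)$: there are involutive Hodge isometries of $\widetilde H(S,\ZZ)\oplus\ZZ\cdot\delta$ that are $-\id$ on $T(S)$ but differ from $-\id_{\widetilde H}\oplus\id_\delta$ on the algebraic part.
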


\begin{proof} This result is due to Beckmann \cite[Prop.\ 7.1]{Beck}.
\end{proof}

According to \S\! \ref{sec:MukaiLattice}, the involution $\sigma\colon \ka_X^{[2]}\to\ka_X^{[2]}$ is expected to act on $\widetilde H(\ka_X^{[2]},\ZZ)$ . In fact, as it clearly deforms to a situation in which $\ka_X\cong\Db(S)$, this follows from the work of
Beckman \cite{Beck} and Taelman \cite{Taelman}. Here, we are using that the sign
involution of $\ka_X^{[2]}$ or $\Db(S^{[2]})$ is preserved by auto-equivalences
coming from auto-equivalences of $\ka_X$ resp.\ $\Db(S)$. By continuity the lemma implies the following.

\begin{cor}
The involution $\sigma$  acts as $-{\rm id}_{\widetilde H}\oplus {\rm id}_\delta$
on $\widetilde H(\ka_X^{[2]},\ZZ)=\widetilde H(\ka_X,\ZZ)\oplus\ZZ\cdot\delta$.\qed
\end{cor}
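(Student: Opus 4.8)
\subsection*{Proof proposal}

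The plan is to deduce the Corollary from the preceding Lemma by a deformation (parallel transport) argument, exactly in the spirit in which the Beckmann--Taelman cohomological action is propagated from the untwisted case. The locus of smooth cubic fourfolds $X\in\kc$ with $\ka_X\cong\Db(S)$ for some K3 surface $S$ is dense in the connected space $\kc$: it contains all Hassett divisors $\kc_d$ with $d$ satisfying Hassett's condition ${\rm (\ast\ast)}$. Fix a given $X$ and a path, or a chain of analytic neighbourhoods, in $\kc$ joining it to such a special cubic $X_0$. Along this family the Kuznetsov components $\ka_X$ vary as a family of K3 categories, hence so do the Hilbert squares $\ka_X^{[2]}$ together with their sign involution $\sigma$, which is produced functorially out of the ${\mathfrak S}_2$-linearised structure (sending $\rho\mapsto-\rho$) and therefore transports uniformly along the family, without any auxiliary choices that could jump. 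Correspondingly the Mukai lattices $\widetilde H(\ka_X^{[2]},\ZZ)$ assemble into a local system refining the period map $(\ref{eqn:Period})$, and by construction this local system is the orthogonal direct sum of the local system $\widetilde H(\ka_X,\ZZ)$ and the constant rank-one system $\ZZ\cdot\delta$; in particular the splitting $\widetilde H(\ka_X^{[2]},\ZZ)=\widetilde H(\ka_X,\ZZ)\oplus\ZZ\cdot\delta$ is monodromy invariant.

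Next I would invoke the Beckmann--Taelman formalism \cite{Beck,Taelman}, using Proposition \ref{prop:BT=naive} to identify the naive Hodge structure $\widetilde H(F_X,\ZZ)$, and more to the point the naive Hodge structure underlying $\widetilde H(\ka_X^{[2]},\ZZ)$, with the twist on which that formalism operates. Since $\sigma$ deforms to $X_0$, where $\ka_{X_0}^{[2]}\cong\Db(S^{[2]})$ and $\sigma$ becomes the sign involution of $\Db(S)^{[2]}$, the Lemma above computes its cohomological action there to be $-{\rm id}_{\widetilde H}\oplus{\rm id}_\delta$; moreover this value is independent of the chosen equivalence $\ka_{X_0}^{[2]}\cong\Db(S^{[2]})$, because any two such differ by an auto-equivalence induced from an auto-equivalence of $\ka_{X_0}$ resp.\ $\Db(S)$, and conjugation by such an auto-equivalence preserves $\sigma$. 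The remaining point is that the realisation of the Fourier--Mukai equivalence $\sigma$ on the Mukai lattice supplied by \cite{Beck,Taelman} is compatible with parallel transport in the family: it is built from the (twisted) Mukai vector of the kernel of $\sigma$, this kernel being the ``same'' object throughout the family, so its Mukai vector is a flat section. An integral isometry of a lattice varying in a family being locally constant, the action of $\sigma$ is then constant over the connected base, and therefore equals $-{\rm id}_{\widetilde H}\oplus{\rm id}_\delta$ at the original $X$ as well, as claimed.

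The main obstacle is precisely this compatibility statement, i.e.\ that the Beckmann--Taelman cohomological realisation of $\sigma$ commutes with the Gauss--Manin connection along deformations of the pair $(\ka_X^{[2]},\sigma)$. Granting the functoriality of the Beckmann--Taelman construction in families and the fact that $\sigma$ is defined uniformly (so that its kernel has monodromy-invariant Mukai vector), this is expected to be routine, and then connectedness of $\kc$ together with the Lemma conclude the proof. If one prefers to avoid invoking any flatness of the Beckmann--Taelman realisation, an alternative is to argue directly that for two cubics $X,X'$ connected in $\kc$ the induced Hodge isometry $\widetilde H(\ka_X^{[2]},\ZZ)\cong\widetilde H(\ka_{X'}^{[2]},\ZZ)$ intertwines the two involutions $\sigma$ — which again reduces to the uniformity of the construction of $\sigma$ in families — and then specialise to $X'=X_0$.
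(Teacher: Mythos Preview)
Your proposal is correct and follows essentially the same route as the paper: deform to a cubic $X_0$ with $\ka_{X_0}\cong\Db(S)$, invoke the Lemma there, and propagate the result by continuity using the Beckmann--Taelman formalism, noting that $\sigma$ is preserved by auto-equivalences coming from $\ka_X$ so the identification is well-defined. The paper compresses all of this into the single phrase ``By continuity the lemma implies the following'' (together with the preceding paragraph), whereas you have spelled out the parallel-transport mechanism and isolated the compatibility of the cohomological realisation with the Gauss--Manin connection as the technical point being used.
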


\TBC{Can we compare this with the action for $X^{[2]}$ and then restrict?}



\end{document}